\renewenvironment{abstract}{%定义摘要环境
    \setlength\textwidth{3in}
    \begin{quote}
}%开始定义
{ \end{quote}}%结束定义
\numberwithin{equation}{section}
\newtheorem{theorem}{Theorem}[section]
\newtheorem{proposition}[theorem]{Proposition}
\newtheorem{definition}[theorem]{Definition}
\newtheorem{lemma}[theorem]{Lemma}
\newtheorem{corollary}[theorem]{Corollary}
\newtheorem{claim}{Claim}
\newtheorem{remark}[theorem]{Remark}
\newcommand\keywords[1]{\textbf{Keywords:}#1}
\title{On the fundamental group of steady gradient Ricci solitons with nonnegative sectional curvature}
\author{Yuxing $\text{Deng}^*$ and~~Yuehan~Hao }
\date{}							
\begin{document}

	\maketitle
    \pagestyle{fancy}%设置页眉
    \fancyhf{}%清空格式
    \fancyhead[OC]{\small FUNDAMENTAL GROUP OF STEADY GRADIENT RICCI SOLITONS}%奇数页
    \fancyhead[EC]{\small YUXING DEND~AND~YUEHAN~HAO}%偶数页
    \renewcommand{\headrulewidth}{0pt}%去掉页眉线
    \fancyfoot[c]{\small \thepage}
    \footnote{%脚注
    \keywords{ Ricci flow, Ricci soliton, fundamental group}\\
    ~ ~~~2020 Mathematics Subject Classification.  53E20; 57S25.\\
    $^{*}$Supported by National Key R$\&$D Program of China 2022YFA1007600.

    }%

	\begin{abstract}
        ABSTRACT. In this paper, we study the fundamental group of the complete steady gradient Ricci soliton with nonnegative sectional curvature. We prove that the fundamental group of such a Ricci soliton is either trivial or infinite. As a corollary, we show that an $n$-dimensional complete $\kappa$-noncollapsed steady gradient Ricci soliton with nonnegative sectional curvature must be diffeomorphic to $\mathbb{R}^n$. 
    
    \end{abstract}

	%\tableofcontents
	
	%\newpage

	%____________________________________________________%
	\section{Introduction} %______文章架构总结____________%
	%____________________________________________________%
	
	%In 1982, Hamilton \cite{hamilton_three-manifolds_1982} introduced a technique so-called Ricci flow, which is a partial differential equation in terms of Riemannian metric tensor in a Riemannian manifold.
	%More precisely, \textbf{Hamilton's Ricci flow} is the geometric evolution equation starting with a smooth Riemannian manifold $(M,g_0)$, and evolves its metric by the equation 
	%\begin{equation}
	%	\begin{aligned}
	%		\frac{\partial}{\partial t} g(t) &= -2Ric(g(t))\\
	%		g(0) &=g_0,
	%	\end{aligned}
	%\end{equation}
	%where $Ric(g(t))$ denotes the Ricci tensor of the metric $g(t)$.

	%Indeed, Hamilton's Ricci flow has recently been used to prove Thurston's geometrization conjecture and the Poincar$\acute{\text{e}}$ conjecture by Perelman\cite{perelman_entropy_2002, perelman_finite_2003, perelman_ricci_2003}.
	 A \textbf{Ricci soliton} is a quadruple $(M ,g,X,\lambda)$ consisting of a smooth manifold $M $, a Riemannian metric $g$, a smooth vector field $X$ and a real constant $\lambda$, if these variables satisfy the equation
	\begin{equation}
		\label{ricci soliton definition}
		{\rm Ric}+\mathcal{L}_Xg=\frac{\lambda}{2}g
	\end{equation}
	on $M$, where ${\rm Ric}$ denotes the Ricci tensor of $g$, and $\mathcal{L}$ denotes the Lie derivative. 
	For $\lambda =0$ the Ricci soliton is \textbf{steady}, for $\lambda >0$ it is \textbf{shrinking} and for $\lambda <0$ \textbf{expanding}.
	Up to scaling, we can normalize $\lambda = 0,1,$ or $-1$, respectively.

	If the vector field $X= \nabla f$ for some smooth function $f$ on $M $, then the Ricci soliton is called a \textbf{gradient Ricci soliton}.
	For such a soliton, equation (\ref{ricci soliton definition}) simplifies to 
	\begin{equation}
		\label{ric+downtri f=}
		{\rm Ric} +\nabla^2f=\frac{\lambda}{2} g,
	\end{equation}
	since $\mathcal{L}_{\nabla f} g = 2 \nabla^2 f$. Here $\nabla^2$ denotes the Hessian of $f$.

	%In this paper, we will use the notation $(M,g,f, \lambda)$ to denote a gradient Ricci soliton. If we can determine the \textbf{scale} $\lambda$ and the \textbf{potential function} $f$  from the context, then the underlying manifold $(M,g)$ will be often referred to as the Ricci soliton. In addition to certain metric $g$ of $M$, we simplify the notation to $M$.

    %Ricci solitons play an important role in the singularity analysis of the Ricci flow. Ricci flows generated by shrinking and steady Ricci solitons are special ancient solutions to the Ricci flow. $\kappa$-solutions are $\kappa$-noncollapsed ancient solutions. The classification of $\kappa$-solutions with positive curvature may rely on the classification of shrinking and steady gradient Ricci with positive curvature, especially the rotational symmetry of positively curved steady gradient Ricci solitons. For recent progress on the classification of $\kappa$-solutions to the Ricci flow, see \cite{Cho-Li-3023,Li-2020-arXiv,Brendle-2020,Brendle-Daskalopoupos-Sesum-2021,Brendle_singularity_models_2022,Brendle_Daskalopoulos_Naff_Sesum_2023,Brendle_Naff_2023,Deng-Zhu-PAMS-2020,Li-Zhang-2022,cao-xie-2024-arxiv}.  
    Since a Ricci soliton that arises as the blow-up limit of a compact Ricci flow is $\kappa$-noncollapsed. We are interested in  $\kappa$-noncollapsed Ricci solitons. In dimension 3, the uniqueness of $\kappa$-noncollapsed and non-flat steady gradient Ricci solitons was claimed by Perelman in \cite{perelman_entropy_2002}. Perelman's claim was confirmed by Brendle in \cite{brendle_rotational_2013}. There are also many works on the higher dimensional  $\kappa$-noncollapsed steady gradient Ricci solitons, such as \cite{Deng-Zhu-2018,Cao_chen_2012,brendle_rotational_2014,Deng-Zhu-Mathann-2020,Deng-Zhu-SCM-2020,Deng-Zhu-JEMS-2020,Bamler-Chow-Deng-Ma-2022,Chow-Deng-Ma-2022,chan_dimension_2023,zhao-zhu-arxiv-2022,zhao-zhu-arxiv-2024,ma-Mahmoudian-Sesum-arxiv-2023}. Many new examples of steady gradient Ricci solitons have also been found, see \cite{Lai-JDG-2024,appleton_family_2022,conlon-Deruelle-arxiv-2020,buttsworth-2021-arxiv,Buzano-Dancer-Wang-2015,wink-2023} and references therein. Due to the steady Ricci solitons constructed by Yi Lai \cite{Lai-JDG-2024}, 4-dimensional $\kappa$-noncollapsed steady gradient Ricci soliton with nonnegative sectional curvature and positive Ricci curvature is not unique. How to classify these Ricci solitons is still an open problem. Even the topology of 4-dimensional $\kappa$-noncollapsed steady gradient Ricci soliton with nonnegative sectional curvature has not been classified yet.

    It is well-known that the topology of manifolds with nonnegative sectional curvature has been studied by Cheeger and Gromoll in \cite{Cheeger-Gromoll-1972}. A manifold with nonnegative sectional curvature must be a normal bundle over a soul\cite{Cheeger-Gromoll-1972,sarafutdinov-1979}. For the expanding case, an expanding gradient Ricci soliton with nonnegative Ricci curvature must be diffeomorphic to the Euclidean space, see Lemma 5.5 in \cite{Cao-Catino-Chen-Mantegazza-Mazzieri-2014} or Lemma 2.2 in \cite{Chen-Deruelle-2015}. For the shrinking case, the fundamental group of a shrinking gradient Ricci soliton is finite \cite{Wylie_2008}.  There are also many noncompact shrinking gradient Ricci solitons with nonnegative sectional curvature and nontrivial finite fundamental groups, such as $(\mathbb{S}^2\times \mathbb{R})/\mathbb{Z}_2$.  In this paper, we study the topology of steady gradient Ricci solitons with nonnegative sectional curvature. We get the following result on the fundamental groups of steady gradient Ricci solitons.

    \begin{theorem}
  	    \label{MAIN}
		Suppose that $(M,g,f)$ is an $n$-dimensional complete steady gradient Ricci soliton with nonnegative sectional curvature. Then the fundamental group $\pi_1(M)$ of $M$ is either trivial or infinite. 

        Moreover, if $\pi_1(M)$ is trivial, then $M$ is diffeomorphic to $\mathbb{R}^n$.
	\end{theorem}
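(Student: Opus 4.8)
The plan is to study the universal cover $(\tilde M,\tilde g,\tilde f)$, which is again a complete steady gradient Ricci soliton with nonnegative sectional curvature, and to use the interplay between the soliton structure and the Cheeger--Gromoll theory of nonnegatively curved manifolds. First I would recall the standard identities on a steady gradient soliton: tracing \eqref{ric+downtri f=} gives $R+\Delta f=0$, and Hamilton's identity yields $R+|\nabla f|^2=C$ for a constant $C\ge 0$; after normalizing we may take $C=1$ provided the soliton is non-flat (if $C=0$ then $\mathrm{Ric}\equiv 0$ and by nonnegative sectional curvature plus the soliton equation the metric is flat, so $M$ is a flat manifold and the conclusion is immediate from Bieberbach theory, since a nontrivial subgroup of a Bieberbach group is infinite). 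Thus $|\nabla f|\le 1$, $f$ is (up to sign) proper and its sublevel sets exhaust $M$ when the soliton is nontrivial; in particular $M$ is noncompact and the soul has positive dimension only if $f$ has interior critical behavior, which one controls via the soliton equation.

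The core dichotomy should come from splitting. Suppose $\pi_1(M)$ is finite and nontrivial. Then $\tilde M\to M$ is a finite cover, so $\tilde M$ is also noncompact with nonnegative sectional curvature, and by Cheeger--Gromoll $\tilde M$ is diffeomorphic to the total space of a normal bundle over a compact totally geodesic soul $\Sigma$. The deck group $G=\pi_1(M)$ acts freely on $\tilde M$ by isometries. The key step is to show the soul must be a point, equivalently $\tilde M\cong\mathbb R^n$: here I would invoke the potential function. On $\tilde M$ the lifted potential $\tilde f$ still satisfies $\mathrm{Ric}+\nabla^2\tilde f=0$ and $R+|\nabla\tilde f|^2=1$; since $R\ge 0$ this forces $\tilde f$ to have no local maximum and its gradient flow to be complete and to push everything off to infinity, which is incompatible with a compact soul of positive dimension (on $\Sigma$, being totally geodesic and compact, $\tilde f|_\Sigma$ attains a maximum, and the soliton equation restricted there gives a contradiction with $R\ge 0$ unless $\dim\Sigma=0$). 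Once $\tilde M\cong\mathbb R^n$, a finite group $G$ acting freely on $\mathbb R^n$ by homeomorphisms must be trivial (e.g. by Smith theory / the fact that $\mathbb R^n$ has no free action of a nontrivial finite group, since otherwise the quotient would be a closed aspherical manifold with finite fundamental group, impossible). Hence $\pi_1(M)$ cannot be finite and nontrivial, proving the first assertion.

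For the ``moreover'' part: if $\pi_1(M)$ is trivial then $M$ itself is simply connected, noncompact (by the non-flat discussion, or because a compact steady soliton is Einstein hence the potential is constant and the argument degenerates — in the compact case nonnegative sectional curvature plus $\mathrm{Ric}=-\nabla^2 f$ forces $\mathrm{Ric}\equiv0$, hence flat, hence a torus, contradicting simple connectedness unless $n=0$), with nonnegative sectional curvature. By Cheeger--Gromoll $M$ is diffeomorphic to the normal bundle of its soul $\Sigma$; the soul is simply connected and totally geodesic, and the same potential-function argument as above (the restriction $f|_\Sigma$ attains its maximum at an interior point, where $\nabla^2 f\le 0$ and $\mathrm{Ric}=-\nabla^2 f\ge 0$ combined with $R\ge 0$ and $R+|\nabla f|^2=1$ forces $\Sigma$ to be a point) shows $\Sigma$ is a point, so $M$ is diffeomorphic to $\mathbb R^n$.

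I expect the main obstacle to be making the ``soul is a point'' step fully rigorous. The heuristic that $|\nabla f|\le1$ with $R\ge0$ prevents a compact totally geodesic soul needs care: one must rule out $\mathrm{Ric}\equiv0$ along $\Sigma$ (which would allow $\nabla^2 f\equiv 0$ on $\Sigma$, i.e. $\tilde f$ affine there, impossible on a compact $\Sigma$ unless constant, and then $R=1$ on $\Sigma$ contradicting $\mathrm{Ric}=0$) and, more seriously, control the behavior of $f$ transverse to $\Sigma$. A clean way is: on a steady soliton with $R\ge 0$, $\Delta f=-R\le0$, so $f$ is superharmonic; a superharmonic function on a manifold bounded above attains no interior maximum unless constant, but $f|_\Sigma$ on the compact soul does attain a max, forcing $f$ constant on a neighborhood, hence $\nabla f\equiv 0$ there, hence $R\equiv1$ there, contradicting $\Delta f = -R<0$ — so $\Sigma$ must be $0$-dimensional. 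The delicate point is that superharmonicity is on all of $M$, not just $\Sigma$, so one should instead argue that $\nabla f$ vanishes somewhere (it does, since $f$ is proper and bounded on one side, it attains an extremum) and use the strong maximum principle for $\Delta f=-R\le 0$ to propagate; I would dedicate the bulk of the write-up to nailing this down, likely via the structure of the level sets of $f$ and completeness of the gradient flow of $f/|\nabla f|^2$ away from critical points.
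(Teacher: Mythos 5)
Your overall architecture (pass to the universal cover, show it is $\mathbb{R}^n$, then kill a nontrivial finite deck group acting freely on $\mathbb{R}^n$ by the standard group-cohomology/Smith-theory argument) is sound, and the last step is legitimate and in fact more direct than the paper's analysis, which instead decomposes each deck transformation as $(\phi_1,\phi_2)\in{\rm Iso}(N)\times{\rm Iso}(\mathbb{R}^k)$ and derives a contradiction from fixed points of $\phi_2$ together with the non-existence of finite quotients of the positive-Ricci factor. However, your pivotal step --- that the soul of the universal cover is a point --- is not established, and the two arguments you offer for it both fail. First, the maximum-principle argument conflates a maximum of $\tilde f|_\Sigma$ with an interior maximum of $\tilde f$ on $\tilde M$: the strong maximum principle for $\Delta \tilde f=-R\le 0$ says nothing about a point where only the restriction to a submanifold is maximized (compare $f(x,y)=-x^2+y^2$ on $\mathbb{R}^2$, harmonic, whose restriction to the $x$-axis has a strict maximum), and at a maximum of $\tilde f|_\Sigma$ the soliton equation merely gives ${\rm Ric}|_{T\Sigma}\ge 0$ there, which is no contradiction. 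Second, your fallback that ``$f$ is proper and bounded on one side, so it attains an extremum'' is exactly the open issue the paper flags (``it is still unknown whether there exists a critical point on $M$, even if we assume the sectional curvature is nonnegative''); the whole of the paper's Subsection 4.2 exists to handle the case where $f$ has no critical point, via the Gauss equation on the noncompact level sets.

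The step can be repaired, but not by a pointwise argument: since $\Sigma$ is compact and totally geodesic, $\Delta_\Sigma(\tilde f|_\Sigma)={\rm tr}_{T\Sigma}\nabla^2\tilde f=-{\rm tr}_{T\Sigma}{\rm Ric}\le 0$, and integrating over the closed manifold $\Sigma$ forces ${\rm Ric}(X,X)=0$ for every $X$ tangent to $\Sigma$. By itself this is still not a contradiction; you must combine it with the Guan--Lu--Xu splitting theorem (which you never invoke): either ${\rm Ric}>0$ everywhere, in which case $\dim\Sigma=0$ immediately, or $\tilde M=N\times\mathbb{R}^k$ with ${\rm Ric}_N>0$ and the potential splitting accordingly, in which case the soul of $\tilde M$ is the soul of $N$ and the same integration argument on $N$ forces it to be a point. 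With that repair your route does give both assertions of the theorem and bypasses the paper's level-set analysis and its quotient lemma (Theorem 4.6/Corollary 4.7); as written, though, the proposal has a genuine gap at its central step.
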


    Bamler\cite{bamler_fundamental_2021} has proved that any $\kappa$-noncollapsed ancient soliton to the Ricci flow must have a finite fundamental group.
	\begin{theorem}[Theorem 1.1. \cite{bamler_fundamental_2021}]
		\label{bamler_noncollapsed finite fundamental group_theorem}
		Let $(M,(g_t)_{t \le 0})$ be a $\kappa$-noncollapsed ancient Ricci flow that has complete time-slices and bounded curvature on compact time-intervals. Then the fundamental group $\pi_1(M)$ is finite.
	\end{theorem}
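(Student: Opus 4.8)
The plan is to reduce the statement to a finiteness assertion about the deck transformation group of the universal cover, controlled by Perelman--Bamler entropy. Let $\pi\colon(\hat M,(\hat g_t)_{t\le0})\to(M,(g_t)_{t\le0})$ be the universal covering flow, obtained by pulling back each time-slice; since the Ricci flow equation is local and $\pi$ is a local isometry, $(\hat M,(\hat g_t))$ is again an ancient flow with complete time-slices and bounded curvature on compact time-intervals, and the deck group $\Gamma=\pi_1(M)$ acts freely, properly discontinuously, and isometrically on every time-slice, commuting with the flow. The first observation I would record is that $\kappa$-noncollapsing is inherited upstairs: if $|{\rm Rm}|\le r^{-2}$ on a parabolic ball, the same bound holds on $\hat M$, and since $\pi$ maps $\hat B(\hat x,r)$ onto $B(\pi\hat x,r)$ by a local isometry, ${\rm Vol}\,\hat B(\hat x,r)\ge{\rm Vol}\,B(\pi\hat x,r)\ge\kappa r^n$. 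Thus $(\hat M,(\hat g_t))$ is itself $\kappa$-noncollapsed, and it suffices to prove $\Gamma$ is finite.

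The central tool is a comparison of the pointed Nash entropy on $M$ and on $\hat M$. Fix a lift $\hat x_0$ of a basepoint $x_0$, and let $\hat K(\hat x_0,\cdot\,;\tau)=(4\pi\tau)^{-n/2}e^{-\hat f}$ be the conjugate heat kernel on $\hat M$ based at $(\hat x_0,0)$, with measure $d\hat\nu_\tau=\hat K\,d\hat g_{-\tau}$. Because the flow is $\Gamma$-invariant, the covering formula $K_M(x_0,y;\tau)=\sum_{\gamma\in\Gamma}\hat K(\hat x_0,\gamma\hat y;\tau)$ holds, so $\pi_*\hat\nu_\tau$ is exactly the conjugate heat kernel measure $\nu_\tau$ on $M$ based at $(x_0,0)$. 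Unfolding the Boltzmann entropy $-\int u\log u$ over a fundamental domain yields the exact identity
\begin{equation*}
\mathcal{N}^{\hat M}_{\hat x_0}(\tau)-\mathcal{N}^{M}_{x_0}(\tau)=\int_{\hat M}\hat K(\hat x_0,\hat y;\tau)\,\log\frac{\sum_{\gamma\in\Gamma}\hat K(\hat x_0,\gamma\hat y;\tau)}{\hat K(\hat x_0,\hat y;\tau)}\,d\hat g_{-\tau}(\hat y)\ \ge\ 0,
\end{equation*}
every summand being nonnegative. I would then invoke the two entropy bounds: the pointed Nash entropy is always $\le0$ (nonincreasing in $\tau$, with limit $0$ as $\tau\to0$), so $\mathcal{N}^{\hat M}_{\hat x_0}(\tau)\le0$; and $\kappa$-noncollapsing is equivalent, through Bamler's heat-kernel estimates, to a uniform lower bound $\mathcal{N}^{M}_{x_0}(\tau)\ge-Y(n,\kappa)$ for all $\tau>0$. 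Hence the entropy gap is bounded by $Y(n,\kappa)$ uniformly in $\tau$.

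Next I would extract a deck-counting bound from this gap. Set $N_\tau=\#\{\gamma\in\Gamma:\ d_{g_{-\tau}}(\hat x_0,\gamma\hat x_0)\le L\sqrt\tau\}$. On a $\kappa$-noncollapsed background Bamler's two-sided Gaussian heat-kernel bounds, together with the variance concentration ${\rm Var}(\hat\nu_\tau)\le H_n\tau$ (so a fixed fraction of the mass sits in a ball of radius $\lesssim\sqrt\tau$), make the ratios $\hat K(\hat x_0,\gamma\hat y;\tau)/\hat K(\hat x_0,\hat y;\tau)$ bounded below by a positive constant for all $\hat y$ in the concentration region and all $\gamma$ of displacement $\lesssim\sqrt\tau$. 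Summing these comparable terms shows the integrand is $\gtrsim\log N_\tau$ on a set of $\hat\nu_\tau$-measure $\ge\tfrac12$, so the gap dominates $c\log N_\tau-C$. Combined with the previous paragraph this gives $N_\tau\le C(n,\kappa)$ for every $\tau$.

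The last and hardest step is to promote this scale-by-scale bound to finiteness of $\Gamma$. If $\Gamma$ were infinite, the orbit $\Gamma\hat x_0$ would be an infinite discrete set, and one wants $N_{\tau_i}\to\infty$ along some $\tau_i\to\infty$, contradicting $N_\tau\le C(n,\kappa)$. The obstacle is exactly the time-dependence of the metric: $N_\tau$ counts orbit points within the \emph{intrinsic} scale $\sqrt\tau$ of $\hat g_{-\tau}$, and since no curvature sign is assumed, distances under the backward flow could a priori grow faster than $\sqrt\tau$, letting the orbit ``escape'' the counting ball. I expect this to be the crux, and I would resolve it through the self-similar structure at $-\infty$: after parabolic rescaling by $\tau^{-1/2}$ the pointed flows $\mathbb{F}$-converge (Bamler compactness) to a metric soliton on which $\Gamma$ still acts isometrically; this limit is $\kappa$-noncollapsed with bounded entropy, so an infinite isometric action forces either a volume collapse or a line-splitting incompatible with those bounds, which is precisely the growth $N_{\tau_i}\to\infty$ needed. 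This reduces the finiteness of $\pi_1(M)$ to rigidity of the tangent flow at $-\infty$.
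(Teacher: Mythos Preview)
This theorem is not proved in the paper you are reading; it is quoted from Bamler \cite{bamler_fundamental_2021} and invoked as a black box to pass from Theorem \ref{MAIN} to Theorem \ref{MAIN-2}. There is therefore no proof in this paper to compare your attempt against.

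For what it is worth, your outline does follow the broad architecture of Bamler's own argument: lift to the universal cover, use the covering-sum formula for the conjugate heat kernel to write the pointed Nash entropy gap $\mathcal{N}^{\hat M}_{\hat x_0}(\tau)-\mathcal{N}^{M}_{x_0}(\tau)$ as a nonnegative integral, and trap that gap between $0$ (from $\mathcal{N}^{\hat M}\le 0$) and a constant $Y(n,\kappa)$ (from the noncollapsing lower bound on $\mathcal{N}^M$). Where your sketch becomes a genuine gap is the final paragraph. You propose to count orbit points within scale $\sqrt\tau$ and then pass to a tangent flow at $-\infty$ via $\mathbb{F}$-compactness to rule out infinite $\Gamma$. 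This detour is both unnecessary and not clearly workable: there is no mechanism guaranteeing that the deck group descends to an isometric action on an $\mathbb{F}$-limit metric flow, and the claim that ``an infinite isometric action forces a line-splitting incompatible with those bounds'' is unsupported in that generality. Bamler's route is more direct: the entropy-gap integral is itself bounded below, essentially by $\log|\Gamma|$, via a Jensen-type inequality applied to the covering sum, so the bound $|\Gamma|\le e^{Y(n,\kappa)}$ falls out of the entropy inequality alone, with no need to track how the displacements $d_{g_{-\tau}}(\hat x_0,\gamma\hat x_0)$ scale or to invoke any compactness at infinity.
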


    As an application of Bamler's finiteness theorem and Theorem \ref{MAIN}, We have

    \begin{theorem}\label{MAIN-2}
		Any $n$-dimensional $\kappa$-noncollapsed complete steady gradient Ricci soliton with nonnegative sectional curvature is diffeomorphic to $\mathbb{R}^n$.
	\end{theorem}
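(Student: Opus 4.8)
The plan is to realize the soliton's canonical Ricci flow as a $\kappa$-noncollapsed ancient flow to which Bamler's Theorem \ref{bamler_noncollapsed finite fundamental group_theorem} applies, and then to intersect its conclusion with the dichotomy of Theorem \ref{MAIN}. Normalize so that ${\rm Ric}+\nabla^2 f=0$. From the standard identity $R+|\nabla f|^2=c$ for some constant $c$, together with $R\ge 0$ (which follows from $\sec\ge 0$), we obtain $0\le R\le c$; since $\sec\ge 0$, the full curvature tensor is then controlled as well, $|{\rm Rm}|\le C(n)R\le C$ on $(M,g)$. In particular $|\nabla f|\le\sqrt{c}$ is bounded, so $\nabla f$ is a complete vector field, its flow $(\phi_t)_{t\in\mathbb{R}}$ is globally defined, and $g_t:=\phi_t^*g$ is a complete eternal solution of the Ricci flow; being the pullback of $g$ by a diffeomorphism, each slice $(M,g_t)$ is isometric to $(M,g)$.

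Next I would verify that $(M,(g_t)_{t\le 0})$ satisfies the hypotheses of Theorem \ref{bamler_noncollapsed finite fundamental group_theorem}: it is ancient with complete time-slices; it is $\kappa$-noncollapsed, since $\kappa$-noncollapsedness is invariant under diffeomorphisms and scalings and holds for $(M,g)$ by assumption, hence for every slice; and it has bounded curvature on compact time-intervals because $\sup_M|{\rm Rm}_{g_t}|=\sup_M|{\rm Rm}_g|\le C$ for all $t$. Bamler's theorem then gives that $\pi_1(M)$ is finite. On the other hand, Theorem \ref{MAIN} says that $\pi_1(M)$ is trivial or infinite. The only group that is simultaneously finite and either trivial or infinite is the trivial group, so $\pi_1(M)=1$, and then the \emph{moreover} part of Theorem \ref{MAIN} yields that $M$ is diffeomorphic to $\mathbb{R}^n$.

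I do not anticipate a genuine obstacle here: given Theorem \ref{MAIN}, the statement follows formally from it and Bamler's theorem. The only point requiring a word of care is checking Bamler's bounded-curvature hypothesis for the canonical flow, but by self-similarity this reduces to the single bound $|{\rm Rm}|\le C$ on $(M,g)$, which is supplied by the steady soliton identity and the nonnegativity of the sectional curvature.
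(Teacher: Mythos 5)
Your proposal is correct and follows exactly the route the paper intends: the paper states Theorem \ref{MAIN-2} as an immediate consequence of Bamler's finiteness theorem (Theorem \ref{bamler_noncollapsed finite fundamental group_theorem}) applied to the soliton's canonical eternal flow, combined with the trivial-or-infinite dichotomy and the ``moreover'' clause of Theorem \ref{MAIN}. Your verification of Bamler's hypotheses (completeness of the flow of $\nabla f$ via $|\nabla f|^2\le c-R\le c$, boundedness of $|{\rm Rm}|$ from $\sec\ge 0$ and $R\le c$, and diffeomorphism-invariance of $\kappa$-noncollapsing) is the right level of care and adds detail the paper leaves implicit.
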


    By Theorem \ref{MAIN-2}, we have the following refinement of Corollary 1.2 by Cao and Xie in \cite{cao-xie-2024-arxiv}.
    \begin{corollary}
        Let $(M^4,g,f)$ be a $4$-dimensional complete noncompact, non-flat, $\kappa$-noncollapsed steady gradient Ricci soliton with nonnegative isotropic curvature. Then, either
        
			(i) $(M^4,g,f)$ has positive isotropic curvature and positive Ricci curvature, or 
            
			(ii) $(M^4,g,f)$ is isometric to the product $N^3\times\mathbb{R}$ of the 3D Bryant soliton with the real line. 
		
    \end{corollary}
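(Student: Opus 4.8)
The plan is to take Cao--Xie's classification as a black box and eliminate the nontrivial quotients it allows by invoking Theorem~\ref{MAIN-2}. Recall that Corollary~1.2 of \cite{cao-xie-2024-arxiv} splits the $4$-dimensional complete noncompact, non-flat, $\kappa$-noncollapsed steady gradient Ricci solitons with nonnegative isotropic curvature into two types: either the soliton has positive isotropic curvature and positive Ricci curvature, or its universal cover is isometric, as a steady gradient Ricci soliton, to the product $N^3\times\mathbb{R}$ of the $3$D Bryant soliton $N^3$ with $\mathbb{R}$; in the latter case $(M^4,g,f)$ is isometric to a quotient $(N^3\times\mathbb{R})/\Gamma$ where $\Gamma\cong\pi_1(M^4)$ acts freely, properly discontinuously, and by soliton isometries. (If one prefers the form of their corollary that lists a separate flat possibility, it is ruled out here by the non-flatness hypothesis.) It therefore suffices to show that in the second case $\Gamma$ is trivial, so that $(M^4,g,f)$ is literally $N^3\times\mathbb{R}$; the two resulting alternatives then match (i) and (ii) of the present statement.

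So assume $(M^4,g,f)$ is isometric to $(N^3\times\mathbb{R})/\Gamma$. The key observation is that this manifold has nonnegative sectional curvature. Indeed, the Bryant soliton $N^3$ has positive sectional curvature, so $N^3\times\mathbb{R}$ has nonnegative sectional curvature — planes tangent to the $N^3$ factor have positive curvature, and any $2$-plane meeting the flat $\mathbb{R}$ direction has nonnegative curvature in a Riemannian product — and since the covering projection $N^3\times\mathbb{R}\to(N^3\times\mathbb{R})/\Gamma$ is a local isometry, nonnegativity of the sectional curvature, being a pointwise condition on $2$-planes, descends to the quotient. Hence $(M^4,g,f)$ is a $4$-dimensional complete steady gradient Ricci soliton with nonnegative sectional curvature, and it is $\kappa$-noncollapsed by the hypothesis of the corollary.

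Now Theorem~\ref{MAIN-2} applies and yields that $M^4$ is diffeomorphic to $\mathbb{R}^4$; in particular $\pi_1(M^4)$ is trivial. On the other hand, $N^3$ is diffeomorphic to $\mathbb{R}^3$, so $N^3\times\mathbb{R}$ is simply connected and is the universal cover of $(N^3\times\mathbb{R})/\Gamma$, whence $\pi_1\big((N^3\times\mathbb{R})/\Gamma\big)\cong\Gamma$. Comparing the two, $\Gamma$ is trivial, so $(M^4,g,f)$ is isometric to $N^3\times\mathbb{R}$ itself, which is case (ii); case (i) is left untouched. This completes the proof. There is no serious obstacle here: the whole argument is a short deduction from Theorem~\ref{MAIN-2} (the substantive result of the paper) and Cao--Xie's structure theorem, and the only point needing a moment's care is checking that nonnegativity of sectional curvature — unlike a global curvature-operator condition — is preserved under the Riemannian covering $N^3\times\mathbb{R}\to M^4$, so that Theorem~\ref{MAIN-2} can be applied directly to $M^4$ without re-establishing any $\kappa$-noncollapsing.
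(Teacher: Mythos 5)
Your proposal is correct and is exactly the deduction the paper intends: the paper gives no written proof, stating only that the corollary follows from Theorem~\ref{MAIN-2} as a refinement of Cao--Xie's Corollary~1.2, and your argument---that in the quotient case the soliton inherits nonnegative sectional curvature from $N^3\times\mathbb{R}$ via the locally isometric covering, so Theorem~\ref{MAIN-2} forces $M^4\cong\mathbb{R}^4$ and hence $\Gamma$ trivial---is precisely that intended route. No gaps.
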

	
	%Hamilton\cite{hamilton_formation_1995} has proved that weach gradient Ricci soliton satisfies 
	%\begin{equation}
	%	R+|\triangledown f|^2- \lambda f = C,
	%\end{equation}
	%where $C$ is a constant. Here $R$ denotes the scalar curvature. 
	%In this paper, we shall mainly focus on gradient steady Ricci solitons. 
	%For non-flat steady gradient Ricci solitons, up to scaling, we can take $C=1$ and then have
	%\begin{equation}
	%	\label{R+ triangledown f = 1}
	%	R+|\triangledown f|^2= 1.
	%\end{equation}
	%Recall the following notion in \cite{perelman_entropy_2002} (or see \cite{chow_ricci_2023}).
	 
	We now outline the main steps involved in the proof of the main result. In Section 2, we briefly review the geometry of the universal Riemannian covering of manifolds. In Section 3, we prove that the isometry groups of universal covers based on gradient Ricci solitons with nonnegative sectional curvature preserve the splitting property argued in \cite{guan_rigidity_nodate}. In addition to positive Ricci curvature, we argue that such solitons are simply connected and do not have non-trivial finite quotient manifolds in Section 4.  In Section 5, we present the proof of Theorem \ref{MAIN}. Suppose that $M$ is a complete steady gradient Ricci soliton with nonnegative sectional curvature. Applying the splitting theorem in \cite{guan_rigidity_nodate} and analyzing the behaviour of covering automorphisms acting on factors of its universal cover separately, we obtain the main theorem.

	\section{Preliminaries}
	
	In this section, we recall some results concerning covering automorphism groups of Riemannian coverings. Given a Riemannian covering map $\pi: (\tilde{M}, \tilde{g}) \rightarrow (M,g)$. The covering automorphism group is defined as follows.

    \begin{definition}[Covering automorphism groups of Riemannian covering maps]
		\label{Covering Automorphism Groups}
		Let $(\tilde{M}, \tilde{g})$ and $(M, g )$ be Riemannian manifolds and $\pi: \tilde{M} \rightarrow M$ be a Riemannian covering map. A \textbf{covering automorphism} (or \textbf{deck transformation}) of $\pi$ is an isomorphism $\varphi: \tilde{M} \rightarrow \tilde{M}$ such that $\pi \circ \varphi = \pi$; 
		the set of all covering automorphisms is a group acting on $\tilde{M}$ on the left, under composition of isomorphisms, 
		called the \textbf{covering automorphism group} (or \textbf{deck transformation group}) and denoted by ${\rm Aut}_{\pi}(\tilde{M})$.
	\end{definition}

	%%%为了叙述方便，我们引入下面的定义%%%%
	%Let $(M,g)$ be a Riemannian manifold. An isometry from $(M,g)$ to itself is called an \textbf{isometry} of $(M,g)$. 
	%The set of all isometries of  $(M,g)$ is a group under compositions, called the \textbf{isometry group } of $(M,g)$, denoted by ${\rm Iso}(M,g)$. We may also denote ${\rm Iso}(M,g)$ by ${\rm Iso}(M)$ for convenience if it will not cause misunderstanding. Given a Riemannian covering map  $\pi: (\tilde{M},\tilde{g}) \rightarrow (M,g)$ with $\tilde{g}=\pi^{\ast}g$. 
    
    Obviously, the covering automorphism group ${\rm Aut}_{\pi}(\tilde{M})$ is a subgroup of the isometry group ${\rm Iso}(\tilde{M},\tilde{g})$. ${\rm Aut}_{\pi}(\tilde{M})$ acts smoothly, freely and properly on $\tilde{M}$.

	\begin{proposition}[\cite{lee_introduction_2012,petersen_riemannian_2006}]
		\label{Aut:smooth+free+proper+isometrical proposition}
		Let $\pi: (\tilde{M}, \tilde{g}) \rightarrow (M , g )$ be a Riemannian covering map over a Riemannian manifold $M$.  ${\rm Aut}_{\pi}(\tilde{M})$ is a discrete Lie group acting smoothly, freely, properly and isometrically on $\tilde{M}$.
	\end{proposition}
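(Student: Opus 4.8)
The plan is to assemble the statement from standard facts about Riemannian coverings together with the Myers--Steenrod theorem; throughout we may assume $\tilde M$ is connected, which is the only case used later since the universal cover is connected. First I would check that every deck transformation is an isometry: if $\varphi\in\mathrm{Aut}_\pi(\tilde M)$, then $\pi\circ\varphi=\pi$ and, because $\pi$ is a Riemannian covering, $\pi^*g=\tilde g$; hence $\varphi^*\tilde g=\varphi^*\pi^*g=(\pi\circ\varphi)^*g=\pi^*g=\tilde g$, so $\mathrm{Aut}_\pi(\tilde M)\le\mathrm{Iso}(\tilde M,\tilde g)$. By Myers--Steenrod, $\mathrm{Iso}(\tilde M,\tilde g)$ is a Lie group acting smoothly and properly on $\tilde M$, so in particular $\mathrm{Aut}_\pi(\tilde M)$ acts smoothly and isometrically.

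Next I would prove the action is free. If $\varphi\in\mathrm{Aut}_\pi(\tilde M)$ fixes some $p\in\tilde M$, then $\varphi$ and $\mathrm{id}_{\tilde M}$ are both lifts of $\pi\colon\tilde M\to M$ through $\pi$ that agree at $p$; since $\tilde M$ is connected, the unique lifting property of covering maps forces $\varphi=\mathrm{id}_{\tilde M}$. Thus every point has trivial stabilizer.

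Then I would establish proper discontinuity, which yields both discreteness and properness. Fix $p\in\tilde M$, choose an evenly covered neighbourhood $V$ of $\pi(p)$ with $\pi^{-1}(V)=\bigsqcup_\alpha U_\alpha$, each $\pi|_{U_\alpha}$ a diffeomorphism onto $V$, and let $U$ be the sheet containing $p$. Any $\varphi\in\mathrm{Aut}_\pi(\tilde M)$ permutes the sheets $\{U_\alpha\}$, so $\varphi(U)\cap U\ne\emptyset$ implies $\varphi(U)=U$; then $\pi|_U\circ\varphi|_U=\pi|_U$ gives $\varphi|_U=\mathrm{id}$, and by the previous step $\varphi=\mathrm{id}_{\tilde M}$. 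Hence $\{\varphi:\varphi(U)\cap U\ne\emptyset\}=\{\mathrm{id}\}$. This is proper discontinuity; it shows $\mathrm{Aut}_\pi(\tilde M)$ cannot accumulate in $\mathrm{Iso}(\tilde M,\tilde g)$, so it is a discrete subgroup, hence a $0$-dimensional Lie group. Finally, a properly discontinuous action of a discrete group on a locally compact Hausdorff space is proper, giving the last assertion.

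All of these steps are classical and may be found in \cite{lee_introduction_2012,petersen_riemannian_2006}; the only place demanding a little care is the passage from "properly discontinuous" in the covering-space sense to "proper" as a group action, where one uses local compactness and the Hausdorff property of $\tilde M$. I do not expect any genuine obstacle: the proposition packages textbook material, and the task is essentially organizational.
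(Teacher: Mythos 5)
The paper offers no proof of this proposition at all --- it is quoted verbatim from \cite{lee_introduction_2012,petersen_riemannian_2006} --- so there is nothing internal to compare against; your write-up is the standard textbook argument, and the isometry computation $\varphi^*\tilde g=(\pi\circ\varphi)^*g=\tilde g$, the freeness via unique lifting, and the evenly-covered-neighbourhood argument for discreteness are all correct.

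The one step that is not right as stated is the final implication ``a properly discontinuous action of a discrete group on a locally compact Hausdorff space is proper.'' With the wandering condition you actually proved (each point has a neighbourhood $U$ with $\varphi(U)\cap U\neq\emptyset$ only for $\varphi=\mathrm{id}$), this is false in general: the $\mathbb{Z}$-action on $\mathbb{R}^2\setminus\{0\}$ generated by $(x,y)\mapsto(2x,\tfrac{1}{2}y)$ is free and wandering, yet for the compact set $K=\{(2^{-n},1)\}_{n\ge0}\cup\{(1,2^{-n})\}_{n\ge0}\cup\{(0,1),(1,0)\}$ the set $\{g:gK\cap K\neq\emptyset\}$ is infinite, so the action is not proper (and indeed the quotient is non-Hausdorff). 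Properness of the deck group therefore needs an input beyond local compactness. Two clean repairs are available to you. Either use the covering structure directly: if $\varphi_i$ are distinct deck transformations with $x_i\in K$, $\varphi_i(x_i)\in K$, pass to limits $x_i\to x$, $\varphi_i(x_i)\to y$, note $\pi(x)=\pi(y)$, take an evenly covered neighbourhood of $\pi(x)$ with sheets $U_x\ni x$, $U_y\ni y$; for large $i$ one gets $\varphi_i(U_x)=U_y$, hence $\varphi_j^{-1}\varphi_i(U_x)=U_x$ and $\varphi_i=\varphi_j$, a contradiction. Or, more in the spirit of your opening paragraph: you already invoked Myers--Steenrod to get that $\mathrm{Iso}(\tilde M,\tilde g)$ acts properly; a discrete subgroup of a Hausdorff topological group is closed, and a closed subgroup of a properly acting group acts properly, which finishes the argument with no further work. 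With either patch the proof is complete.
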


    ${\rm Aut}_{\pi}(\tilde{M})$ is also closely related to the fundamental groups of  $M$. 
	
    \begin{theorem}[Corollary 12.9. \cite{lee_introduction_2011}]
        \label{theorem-structure theorem on universal covering map}
		Suppose that $\pi : \tilde{M} \rightarrow M$ is a universal covering map, 
		$\tilde{x} \in \tilde{M}$, 
		and $x= \pi(\tilde{x})$. 
		 Then, we have:
		\begin{equation}
			{\rm Aut}_{\pi}(\tilde{M}) \cong \pi_1(M,x).\notag
		\end{equation} 
		 
	\end{theorem}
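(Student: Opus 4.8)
The plan is to exhibit an explicit group isomorphism $\Phi\colon {\rm Aut}_{\pi}(\tilde M)\to\pi_1(M,x)$, built from the standard covering-space toolkit: the path-lifting and homotopy-lifting properties, the unique lifting property (two lifts of a map that agree at one point coincide), and the lifting criterion applied to the simply connected space $\tilde M$. Since $\pi$ is a \emph{universal} covering, $\tilde M$ is simply connected, and this hypothesis will be used in an essential way below.

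First I would define $\Phi$. Given $\varphi\in{\rm Aut}_{\pi}(\tilde M)$, the relation $\pi\circ\varphi=\pi$ forces $\pi(\varphi(\tilde x))=\pi(\tilde x)=x$, so $\varphi(\tilde x)$ lies in the fiber $\pi^{-1}(x)$. Choosing any path $\tilde\gamma$ in $\tilde M$ from $\tilde x$ to $\varphi(\tilde x)$ (possible since $\tilde M$ is path-connected), the projection $\gamma:=\pi\circ\tilde\gamma$ is a loop based at $x$; set $\Phi(\varphi):=[\gamma]\in\pi_1(M,x)$. Well-definedness follows because $\tilde M$ is simply connected: any two such paths $\tilde\gamma,\tilde\gamma'$ are path-homotopic rel endpoints, hence so are $\pi\circ\tilde\gamma$ and $\pi\circ\tilde\gamma'$, giving the same class.

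Next I would check the homomorphism property and injectivity. For $\varphi,\psi\in{\rm Aut}_{\pi}(\tilde M)$ pick paths $\tilde\gamma$ from $\tilde x$ to $\varphi(\tilde x)$ and $\tilde\delta$ from $\tilde x$ to $\psi(\tilde x)$. Then $\varphi\circ\tilde\delta$ runs from $\varphi(\tilde x)$ to $(\varphi\psi)(\tilde x)$, so the concatenation $\tilde\gamma\ast(\varphi\circ\tilde\delta)$ runs from $\tilde x$ to $(\varphi\psi)(\tilde x)$; applying $\pi$ and using $\pi\circ\varphi=\pi$ turns it into the loop $\gamma\ast\delta$. Hence $\Phi(\varphi\psi)=[\gamma\ast\delta]=[\gamma][\delta]=\Phi(\varphi)\Phi(\psi)$. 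For injectivity, suppose $\Phi(\varphi)$ is trivial; then $\gamma=\pi\circ\tilde\gamma$ is null-homotopic, so by the homotopy lifting property its lift $\tilde\gamma$ starting at $\tilde x$ is a loop, i.e.\ $\varphi(\tilde x)=\tilde x$; since $\varphi$ and ${\rm id}_{\tilde M}$ are both lifts of $\pi$ through $\pi$ that agree at $\tilde x$, the unique lifting property gives $\varphi={\rm id}_{\tilde M}$.

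Finally I would prove surjectivity, which is the step requiring the most care. Given $[\gamma]\in\pi_1(M,x)$, lift $\gamma$ to a path $\tilde\gamma$ in $\tilde M$ starting at $\tilde x$ and ending at some $\tilde y\in\pi^{-1}(x)$. Because $\tilde M$ is simply connected, the lifting criterion (applied to $\pi\colon\tilde M\to M$, viewing the domain $\tilde M$ as the space whose map is to be lifted through $\pi$) yields a continuous $\varphi\colon\tilde M\to\tilde M$ with $\pi\circ\varphi=\pi$ and $\varphi(\tilde x)=\tilde y$; applying the criterion again with the base points $\tilde y$ and $\tilde x$ interchanged gives $\psi$ with $\pi\circ\psi=\pi$ and $\psi(\tilde y)=\tilde x$, and two applications of the unique lifting property show $\psi\circ\varphi={\rm id}_{\tilde M}$ and $\varphi\circ\psi={\rm id}_{\tilde M}$. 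Thus $\varphi\in{\rm Aut}_{\pi}(\tilde M)$, and since $\tilde\gamma$ is already a path from $\tilde x$ to $\varphi(\tilde x)=\tilde y$ we get $\Phi(\varphi)=[\pi\circ\tilde\gamma]=[\gamma]$. The point to watch in this step is that it is simple connectivity of $\tilde M$, not merely connectivity, that licenses the lifting criterion and forces the lift $\varphi$ to be a homeomorphism; the rest is bookkeeping with concatenations of paths and their projections.
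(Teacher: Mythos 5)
Your proof is correct, and it is a genuinely different route from the one the paper takes. The paper does not prove this statement from scratch: it quotes it as Corollary 12.9 of Lee and obtains it by specializing the general structure theorem for deck groups (Theorem 12.7 there), namely ${\rm Aut}_{\pi}(\tilde{M}) \cong {\rm N}_{\pi_1(M,x)}(\pi_*\pi_1(\tilde{M},\tilde{x}))/\pi_*\pi_1(\tilde{M},\tilde{x})$, together with the observation that for a universal cover $\pi_*\pi_1(\tilde{M},\tilde{x})$ is trivial, so the normalizer is all of $\pi_1(M,x)$ and the quotient collapses to $\pi_1(M,x)$ itself. You instead build the isomorphism $\Phi$ by hand from path lifting, homotopy lifting, unique lifting, and the lifting criterion; all four steps (well-definedness, homomorphism property, injectivity, surjectivity) are carried out correctly, and you rightly flag that simple connectivity of $\tilde{M}$ is what makes $\Phi$ well defined and what licenses the lifting criterion in the surjectivity step. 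What your approach buys is a self-contained, explicit isomorphism that matches a loop class to the deck transformation moving $\tilde{x}$ to the endpoint of the lifted loop; what the paper's approach buys is brevity and placement within the general theory valid for arbitrary (not necessarily universal) covers. Two small remarks: whether $\Phi$ comes out as a homomorphism or an anti-homomorphism depends on the concatenation convention, but this does not affect the isomorphism statement since $g\mapsto g^{-1}$ is an anti-automorphism of any group; and in the Riemannian setting of this paper the continuous lift $\varphi$ produced in your surjectivity step is automatically a smooth isometry because $\pi$ is a local isometry, so it does land in ${\rm Aut}_{\pi}(\tilde{M})$ as defined in Definition \ref{Covering Automorphism Groups}.
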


	As a result of Proposition \ref{Aut:smooth+free+proper+isometrical proposition} and Theorem \ref{theorem-structure theorem on universal covering map}, the following holds.

	%_________________LEE__INTRODUCTION_TO_RIEMANNIAN_MANIFOLDS____________%
	\begin{proposition}[Corollary 2.33. \cite{lee_introduction_2018}]\label{propositon-quotients isometric to universal cover module deck group}
		Suppose $(M,g)$ and $(\tilde{M}, \tilde{g})$ are Riemannian manifolds, and $\pi : \tilde{M} \rightarrow M$ is a universal Riemannian covering map. Then $M$ is isometric to $\tilde{M}/{\rm Aut}_{\pi}(\tilde{M})$.
	\end{proposition}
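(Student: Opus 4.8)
The plan is to realize the claimed isometry concretely: descend the covering map $\pi$ through the orbit projection onto $\tilde M/{\rm Aut}_\pi(\tilde M)$, and then check that the resulting map is a bijective local isometry, hence an isometry. Set $\Gamma := {\rm Aut}_\pi(\tilde M)$. First I would invoke Proposition \ref{Aut:smooth+free+proper+isometrical proposition}: $\Gamma$ acts smoothly, freely, properly and isometrically on $(\tilde M,\tilde g)$, so by the quotient manifold theorem and its Riemannian refinement the orbit space $N := \tilde M/\Gamma$ is a smooth manifold equipped with a unique Riemannian metric $g_N$ making the projection $q : \tilde M \to N$ a Riemannian covering map. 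Since $\pi\circ\varphi = \pi$ for every $\varphi\in\Gamma$, the map $\pi$ is constant on $\Gamma$-orbits, so it factors as $\pi = F\circ q$ for a unique smooth map $F : N \to M$.

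Next I would show that $F$ is a local isometry. Given $p\in N$, pick $\tilde p\in q^{-1}(p)$ and a neighborhood $\tilde U\ni \tilde p$ on which both $q|_{\tilde U}$ and $\pi|_{\tilde U}$ restrict to isometries onto their images --- possible because $q$ and $\pi$ are Riemannian coverings. On $U := q(\tilde U)$ one then has $F = (\pi|_{\tilde U})\circ(q|_{\tilde U})^{-1}$, a composition of isometries; hence $F$ is a local isometry, in particular a local diffeomorphism. It remains to see that $F$ is bijective. Surjectivity is immediate from $\pi = F\circ q$ being surjective. For injectivity, suppose $F(q(\tilde x)) = F(q(\tilde y))$, i.e. $\pi(\tilde x)=\pi(\tilde y)$; since $\tilde M$ is simply connected, $\Gamma$ acts transitively on each fiber of $\pi$, so there is $\varphi\in\Gamma$ with $\varphi(\tilde x)=\tilde y$, whence $q(\tilde x)=q(\tilde y)$. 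A bijective local diffeomorphism is a diffeomorphism, and a diffeomorphism that is a local isometry satisfies $F^{*}g = g_N$; therefore $F$ is an isometry from $\tilde M/{\rm Aut}_\pi(\tilde M)$ onto $M$.

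The only genuinely non-formal ingredient is the transitivity of $\Gamma$ on the fibers of $\pi$, which is precisely where simple connectedness of $\tilde M$ is used (via the lifting criterion for covering maps, or equivalently as part of the proof of the identification ${\rm Aut}_\pi(\tilde M)\cong\pi_1(M,x)$ recorded in Theorem \ref{theorem-structure theorem on universal covering map}). Everything else is routine bookkeeping about Riemannian quotients and local isometries, so I expect that fiber-transitivity step to be the main point --- though, in this setting, it is already a well-documented fact rather than a real difficulty.
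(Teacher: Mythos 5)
Your proposal is correct and is essentially the standard argument: the paper itself gives no proof of this proposition, citing it directly as Corollary 2.33 of Lee's textbook, and your construction (descend $\pi$ through the orbit projection, check it is a bijective local isometry, with fiber-transitivity of the deck group supplying injectivity) is precisely how the cited source establishes it. The only ingredient beyond bookkeeping is, as you say, transitivity of ${\rm Aut}_{\pi}(\tilde{M})$ on the fibers of $\pi$, which is where simple connectedness of $\tilde{M}$ enters; you have identified and justified that step correctly.
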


	\section{The isometry group of the universal cover}

 Given a steady gradient Ricci soliton $(M,g,f)$, suppose $(\tilde{M},\tilde{g})$ is the universal cover of $(M,g)$. By  Theorem \ref{theorem-structure theorem on universal covering map},
    \begin{align}
         \pi_1(M,x) \cong {\rm Aut}_{\pi}(\tilde{M})\subseteq {\rm Iso}(\tilde{M},\tilde{g})\notag
    \end{align}
    So, to figure out the structure of $\pi_1(M,x)$, we need to study the isometry group ${\rm Iso}(\tilde{M},\tilde{g})$.
	In \cite{guan_rigidity_nodate}, Guan-Lu-Xu prove a splitting theorem for gradient Ricci solitons with nonnegative sectional curvature.
	\begin{theorem}[Theorem 1.1. \cite{guan_rigidity_nodate}]
        \label{split theorem_universal}
		Let $(M^n ,g ,f)$ be a gradient Ricci soliton. If $g$ has nonnegative sectional curvature, then the rank of the Ricci curvature is constant. Thus, either the  Ricci curvature is strictly positive or the universal covering $(\tilde{M}, \tilde{g}) = (N^k,h) \times \mathbb{R}^{n-k}$ splits isometrically and $(N^k, h)$ has strictly positive Ricci curvature.
	\end{theorem}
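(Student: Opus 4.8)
The plan is to establish the ``constant rank'' assertion first, and then to obtain the splitting of the universal cover from the de Rham decomposition theorem together with an elementary curvature estimate. For $x\in M$ put $\mathcal K_x:=\{\,v\in T_xM:{\rm Ric}(v,\cdot)=0\,\}$, the null space of the Ricci form at $x$. Nonnegative sectional curvature forces ${\rm Ric}\ge 0$, so $v\in\mathcal K_x$ iff ${\rm Ric}(v,v)=0$, and $\mathcal K_x$ is the lowest eigenspace of ${\rm Ric}$ precisely when the lowest eigenvalue of ${\rm Ric}$ vanishes. The goal is to show that $\dim\mathcal K_x$ equals a constant $n-k$ independent of $x$ and that $\mathcal K:=\bigcup_x\mathcal K_x$ is a \emph{parallel} distribution; the stated dichotomy and the splitting then follow readily.

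First I would use the second-order equation satisfied by the Ricci tensor of a gradient Ricci soliton. Along the self-similar Ricci flow $g(t)$ generated by $(M,g,f)$ the Ricci tensor solves the Lichnerowicz-type heat equation $\partial_t{\rm Ric}=\Delta_L{\rm Ric}$; equivalently, differentiating the soliton equation ${\rm Ric}+\nabla^2 f=\frac{\lambda}{2}g$ and contracting the second Bianchi identity gives the elliptic version
\begin{equation}
\Delta_f{\rm Ric}_{ij}\;=\;\lambda\,{\rm Ric}_{ij}\;-\;2\,R_{ikjl}\,{\rm Ric}^{kl},\notag
\end{equation}
where $\Delta_f:=\Delta-\nabla_{\nabla f}$ is the drift Laplacian. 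The one property I need is that the reaction (zeroth-order) term obeys Hamilton's null-eigenvector condition under our curvature hypothesis: if $v$ is a unit vector with ${\rm Ric}(v,\cdot)=0$ and $\{e_a\}$ is an orthonormal eigenbasis of ${\rm Ric}$ with eigenvalues $\mu_a\ge 0$, then, tested on the pair $(v,v)$, this term reduces to $2\sum_a\mu_a K(v,e_a)$, which is nonnegative because each $\mu_a\ge 0$ and each sectional curvature $K(v,e_a)\ge 0$ (the purely quadratic-in-${\rm Ric}$ part vanishes since $v$ is null). Hence Hamilton's strong maximum principle for symmetric $2$-tensors applies — parabolically along $g(t)$ for $t>0$, or via its elliptic analog for $\Delta_f$ on $(M,g)$ directly — and yields that $\mathcal K$ is a smooth subbundle of $TM$ of constant rank $n-k$ which is invariant under parallel transport, i.e.\ a parallel distribution. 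In particular the rank of ${\rm Ric}$ is the constant $k$.

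Now pass to the universal Riemannian cover $\pi:(\tilde M,\tilde g)\to(M,g)$, which is complete and simply connected, and let $\tilde{\mathcal K}$ be the lift of $\mathcal K$, again a parallel distribution (as is $\tilde{\mathcal K}^{\perp}$). If $k=n$ then $\mathcal K=0$, ${\rm Ric}>0$ on $M$, and we are in the first alternative. Otherwise $0\le k<n$, $\tilde{\mathcal K}$ is a nontrivial parallel distribution, so the holonomy representation of $(\tilde M,\tilde g)$ is reducible and the de Rham decomposition theorem yields an isometric splitting $(\tilde M,\tilde g)=(N^k,h)\times(P^{n-k},g_P)$ with $TN=\tilde{\mathcal K}^{\perp}$ and $TP=\tilde{\mathcal K}$. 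To identify the factors: for a unit $v\in\mathcal K_x$, completing $v$ to an orthonormal basis and using ${\rm Ric}(v,v)=\sum_a K(v,e_a)=0$ together with $K\ge 0$ forces $K(v,e_a)=0$ for all $a$, hence the form $(x,y)\mapsto R(v,x,v,y)$ vanishes identically; thus every $2$-plane contained in $\mathcal K$ has zero sectional curvature, so $P$ is flat, complete and simply connected, i.e.\ $(P,g_P)\cong\mathbb R^{n-k}$. On the other hand, for a Riemannian product ${\rm Ric}_{\tilde g}={\rm Ric}_h\oplus{\rm Ric}_{g_P}={\rm Ric}_h\oplus 0$, and since $\ker{\rm Ric}_{\tilde g}=\tilde{\mathcal K}=TP$ one gets $\ker{\rm Ric}_h=0$; combined with ${\rm Ric}_h\ge 0$ this gives ${\rm Ric}_h>0$. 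Hence $(\tilde M,\tilde g)=(N^k,h)\times\mathbb R^{n-k}$ with $(N^k,h)$ of strictly positive Ricci curvature.

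I expect the constant-rank step to be the main obstacle: it rests on the strong maximum principle for the symmetric-tensor equation above, whose essential input is the null-eigenvector sign computation of the second paragraph, and one must be somewhat careful with regularity and — should the sectional curvature fail to be a priori bounded — either run the argument locally or use the elliptic version of the principle rather than the parabolic one. The remaining steps, namely the de Rham splitting and the identification of the flat and positively curved factors, are routine.
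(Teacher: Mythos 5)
The paper does not actually prove this statement: it is imported verbatim as Theorem 1.1 of \cite{guan_rigidity_nodate} and used as a black box, so there is no internal proof to compare against. Judged on its own, your argument is the standard route to such splitting results and is essentially correct. The soliton identity $\Delta_f\mathrm{Ric}_{ij}=\lambda\,\mathrm{Ric}_{ij}-2R_{ikjl}\mathrm{Ric}^{kl}$ is right with the paper's normalization $\mathrm{Ric}+\nabla^2 f=\tfrac{\lambda}{2}g$; the null-eigenvector computation $2R_{vkvl}\mathrm{Ric}^{kl}=2\sum_a\mu_a K(v,e_a)\ge 0$ is exactly the input the tensor strong maximum principle needs; and the identification of the factors is clean --- the form $(x,y)\mapsto R(v,x,v,y)$ is positive semidefinite with trace $\mathrm{Ric}(v,v)=0$, hence vanishes, so the leaf of the null distribution is flat, complete and simply connected, while $\mathrm{Ric}_h$ on the complementary factor has trivial kernel and is therefore positive. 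Two points deserve tightening. First, a sign-bookkeeping slip: with the equation written as above, the zeroth-order term \emph{on the right-hand side} tested on a null vector is $-2\sum_a\mu_a K(v,e_a)\le 0$, which is the correct sign for the strong minimum principle applied to the nonnegative tensor (one needs $\Delta_f u\le Cu$ for $u=\mathrm{Ric}(V,V)$ near its zero); calling this term ``nonnegative'' conflates it with its negative, so state explicitly which quantity carries which sign. Second, the deduction that the null space is a parallel subbundle requires the usual care in passing from the pointwise inequality at the null vector to an inequality for $\Delta_f\bigl(\mathrm{Ric}(V,V)\bigr)$ for a suitable local extension $V$ (or, alternatively, the barrier formulation for the smallest eigenvalue); you correctly flag this as the delicate step, but it is the one place where a precise citation of an elliptic tensor maximum principle, or a short self-contained argument, should replace the appeal to ``Hamilton's strong maximum principle'' in its parabolic form. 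With those adjustments the proof is complete and, as far as one can tell, follows the same strategy as the cited source.
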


    Inspired by this, we want to study the relations of ${\rm Iso}(N,g)$, ${\rm Iso}(\mathbb{R}^k)$ and ${\rm Iso}(\tilde{M}, \tilde{g})$. In general,  ${\rm Iso}(M,g)$ is not the product of  ${\rm Iso}(M_1,g_1)$ and ${\rm Iso}(M_2,g_2)$ when $M=(M_1,g_1)\times(M_2,g_2)$. For example, given any irreducible Riemannian manifold $(M,g)$, ${\rm Iso}(M,g)\times {\rm Iso}(M,g)$ is a proper subgroup of ${\rm Iso}(M\times M,g\oplus g)$. However, also in view of Theorem \ref{split theorem_universal}, we can obtain the following decomposition result due to the difference between the curvatures on $N^k$ and $\mathbb{R}^{n-k}$.
	
	\begin{proposition}
    		\label{proposition-isometry separately N R^n}
		Suppose that $N$ is a simply connected complete Riemannian manifold with positive Ricci curvature. Then, we have ${\rm Iso} (N \times \mathbb{R}^n) ={\rm Iso} ( N ) \times {\rm Iso} ( \mathbb{R}^n)$ for $n\in \mathbb{N}$.
	\end{proposition}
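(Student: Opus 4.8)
The plan is to prove the nontrivial inclusion $\mathrm{Iso}(N\times\mathbb{R}^n)\subseteq\mathrm{Iso}(N)\times\mathrm{Iso}(\mathbb{R}^n)$, the reverse inclusion being immediate from the product structure of the metric $g_N\oplus g_{\mathbb{R}^n}$. The guiding idea is that the Euclidean factor can be recovered from the intrinsic geometry of $N\times\mathbb{R}^n$ in a way that every isometry must respect: I would realize $T\mathbb{R}^n$ as the \emph{nullity distribution} of the curvature tensor, a step which is exactly where the hypothesis $\mathrm{Ric}_N>0$ enters.

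First I would note that, because $N$ has positive Ricci curvature, at every $x\in N$ the nullity space $\{w\in T_xN:\ R_N(w,\cdot)\,\cdot=0\}$ is trivial: if $R_N(w,\cdot)=0$ then $\mathrm{Ric}_N(w,w)=0$, forcing $w=0$. Since the curvature tensor of $N\times\mathbb{R}^n$ is $R_N\oplus 0$, it follows that at each point $p=(x,v)$ the nullity space $\mathcal{N}_p:=\{W\in T_p(N\times\mathbb{R}^n):\ R(W,\cdot)\,\cdot=0\}$ equals $\{0\}\oplus\mathbb{R}^n$; that is, $\mathcal{N}=T\mathbb{R}^n$ and hence $\mathcal{N}^\perp=TN$ as distributions. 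Any isometry $\varphi$ of $N\times\mathbb{R}^n$ pulls back the curvature tensor, so $d\varphi$ preserves $\mathcal{N}$ and, being orthogonal, also preserves $\mathcal{N}^\perp=TN$. Both $TN$ and $T\mathbb{R}^n$ are integrable, with leaves $N\times\{v\}$ and $\{x\}\times\mathbb{R}^n$ respectively, and a diffeomorphism carrying an integrable distribution to itself permutes its leaves; therefore $\varphi(N\times\{v\})=N\times\{\sigma(v)\}$ and $\varphi(\{x\}\times\mathbb{R}^n)=\{\tau(x)\}\times\mathbb{R}^n$ for suitable bijections $\sigma$ of $\mathbb{R}^n$ and $\tau$ of $N$. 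Intersecting the two leaves through $\varphi(x,v)$ gives the product form $\varphi(x,v)=(\tau(x),\sigma(v))$. From this, restricting to slices and projecting shows $\tau$ and $\sigma$ are smooth (with smooth inverses, by the same argument for $\varphi^{-1}$), and evaluating $|d\varphi|=1$ separately on vectors tangent to the $N$-slices and to the $\mathbb{R}^n$-slices shows $\tau\in\mathrm{Iso}(N)$ and $\sigma\in\mathrm{Iso}(\mathbb{R}^n)$. Hence $\varphi=(\tau,\sigma)\in\mathrm{Iso}(N)\times\mathrm{Iso}(\mathbb{R}^n)$, as desired. (This is, in effect, the de Rham decomposition theorem specialized to the case where every non-Euclidean de Rham factor of $N\times\mathbb{R}^n$---namely those making up $N$---has positive Ricci curvature, so that the Euclidean factor is canonically pinned down.)

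The step I expect to require the most care is the identification $\mathcal{N}=T\mathbb{R}^n$ together with the passage from the infinitesimal statement ($d\varphi$ preserves the two distributions) to the global one ($\varphi$ is a product map): one must check that an isometry preserving $\mathcal{N}$ genuinely permutes the product leaves rather than merely the distributions. The positivity of $\mathrm{Ric}_N$ is precisely what forbids any "tilting" between the $N$- and $\mathbb{R}^n$-directions and makes this work; the remaining verifications are routine.
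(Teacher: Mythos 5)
Your proof is correct, and it reaches the same structural conclusion as the paper's (every isometry of $N\times\mathbb{R}^n$ preserves the two tangent distributions, hence permutes the product slices, hence splits), but the key mechanism is different. The paper characterizes $T\mathbb{R}^n$ as the flat factor of the de Rham (holonomy) decomposition: it invokes the fact that the differential of an isometry commutes with parallel transport (Kobayashi--Nomizu, Ch.~IV, Prop.~2.5), so $d\varphi$ carries the canonical decomposition of $T_{(x,y)}(N\times\mathbb{R}^n)$ to that of the image point, and positive Ricci curvature on $N$ prevents the flat factor from mixing with the others; it then concludes by the rigidity of isometries (two isometries with the same value and differential at one point coincide). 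You instead identify $T\mathbb{R}^n$ intrinsically as the nullity distribution of the curvature tensor --- using that $\mathrm{Ric}_N>0$ forces the nullity of $R_N$ to be trivial, while $R_{N\times\mathbb{R}^n}=R_N\oplus 0$ --- and conclude by observing that $\varphi$ permutes the leaves of the two integrable distributions and intersecting leaves. Your route is more elementary (no holonomy or de Rham uniqueness theorem is needed) and, as a bonus, it does not actually use the simple connectedness or completeness of $N$, only that the curvature tensor of $N$ has trivial nullity at every point; the paper's route leans on standard cited machinery and is shorter to write down given those references. Both arguments are sound, and the step you flagged as delicate --- passing from invariance of the distributions to the global product form of $\varphi$ --- is handled adequately by your leaf-permutation argument (the image of a maximal integral manifold under a distribution-preserving diffeomorphism is again a maximal integral manifold, by applying the same reasoning to $\varphi^{-1}$).
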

	\begin{proof}
		Let $\varphi: N  \times \mathbb{R}^n \rightarrow N  \times \mathbb{R}^n $ be 
		an isometry of  $N  \times \mathbb{R}^n $. 
		Fix a point $ (x, y) \in N  \times \mathbb{R}^n $  and 
		consider the differential of $\varphi$: 
		\begin{equation}
			d \varphi_{(x, y)}: 
			T_{(x, y)}(N  \times \mathbb{R}^n ) 
			\cong 
			T_{x} N  \oplus T_{y} \mathbb{R}^n  
			\rightarrow 
			T_{\left(x^{\prime}, y^{\prime}\right)}(N  \times \mathbb{R}^n ) 
			\cong
			T_{x^{\prime}} N  \oplus T_{y^{\prime}} \mathbb{R}^n,
		\end{equation}
		where  $\left(x^{\prime}, y^{\prime}\right)=\varphi(x, y) $.
		
		\begin{claim}
			$d\varphi$ sends  $T_{x}  N  $ to  $T_{x'}  N  $ and   $T_{y} \mathbb{R}^n $ to $T_{y'} \mathbb{R}^n $.
			\label{isometrysep}
		\end{claim}
		
		\begin{proof}[Proof of claim \ref{isometrysep}]
			Recall from Proposition 2.5 of Chapter IV in \cite{kobayashi_foundations_1996} that if  $\varphi$ is an isometry of $N  \times \mathbb{R}^n$, 
			then the differential of $\varphi$ commutes with parallel transports.
			More precisely, if $\gamma $ is a loop based at  $(x,y)$ in $N  \times \mathbb{R}^n$, then the following diagram is commutative:
			\begin{equation}
				\begin{tikzcd}
					T_{(x,y)}(N \times \mathbb{R}^n) \arrow[d,"d\varphi"] \arrow[r,"\gamma"] & T_{(x,y)}(N  \times \mathbb{R}^n)  \arrow[d,"d\varphi"]\\
					T_{(x',y')}(N  \times \mathbb{R}^n) \arrow[r,"\gamma'"] & T_{(x',y')}(N  \times \mathbb{R}^n),   
				\end{tikzcd}
			\end{equation}
			where  $\gamma' = \varphi(\gamma)$. Regarding this, $d\varphi$ sends a canonical decomposition of $T_{(x,y)}( N  \times \mathbb{R}^n)$ to a canonical decomposition of $T_{\varphi(x,y)}( N \times \mathbb{R}^n)$. 
			
            Note that sectional curvatures are invariant under isometries. Since $N$ has positive Ricci curvature, $d\varphi$ sends  $T_{x}  N  $ to  $T_{x'}  N  $ and   $T_{y} \mathbb{R}^n $ to $T_{y'} \mathbb{R}^n $ separately. Here, we use the de Rham decomposition.
            %sends $T_{(x,y)} (N \times \{y\} )  $ to $T_{(x',y')} (N \times \{y'\} )$ and $T_{(x,y)} (\{x\} \times \mathbb{R}^n )$ to $T_{(x',y')} (\{x'\} \times \mathbb{R}^n )$, separately. Hence we complete the proof.
        %, given a point $(u,v) \in N  \times \mathbb{R}^n $,
		%	$T_{(u, v)}(N  \times \mathbb{R}^n ) 
		%	=T_{(u,v)} (N \times \{v\} ) + T_{(u,v)} (\{u\} \times \mathbb{R}^n )$ is a decomposition into subspaces invariant by the holonomy group.  Note that sectional curvatures are invariant under isometries. 
		\end{proof}
		As an isometry maps geodesics to geodesics,  $\varphi$  sends totally geodesic submanifolds $N  \times\{y\} $ to $ N  \times\left\{y^{\prime}\right\} $ and $\{x\} \times \mathbb{R}^n$  to  $\left\{x^{\prime}\right\} \times \mathbb{R}^n $. Suppose $\pi_{N}$ and $\pi_{\mathbb{R}^n}$ are the projection from $N \times \mathbb{R}^n$ onto $N$ and $\mathbb{R}^n$, respectively. 
		We define $\varphi_1 : N \rightarrow N$ by $\varphi_1(u)= \pi_N \circ \varphi(u,y)$  and $\varphi_2 : \mathbb{R}^n \rightarrow \mathbb{R}^n$ by $\varphi_2(v)= \pi_{\mathbb{R}^n} \circ \varphi(x,v)$, where $u \in N$ and $v \in \mathbb{R}^n$. 

        Considering that  $\varphi_{1} \in {\rm Iso} (N)$  and  $\varphi_{2} \in {\rm Iso} (\mathbb{R}^n) $, 
            $ \left(\varphi_{1}, \varphi_{2}\right)$ is a product isometry of  $N \times \mathbb{R}^n $. 
		Isometries $\varphi$ and $\left(\varphi_{1}, \varphi_{2}\right)$ satisfy $\varphi(x,y) = \left(\varphi_{1}, \varphi_{2}\right)(x,y)$ and $d\varphi_{(x,y)} = d\left(\varphi_{1}, \varphi_{2}\right)_{(x,y)}$. Hence they coincide globally and we complete the proof.
	\end{proof}

	\section{Steady gradient Ricci solitons with nonnegative sectional curvature and positive Ricci curvature} 
    In this section, we will study steady gradient Ricci solitons with nonnegative sectional curvature and positive Ricci curvature. If the scalar curvature attains its maximum and the Ricci curvature is positive, then $M^n$ is diffeomorphic to $\mathbb{R}^n$\cite{Cao_chen_2012}. On a steady gradient Ricci soliton, the scalar curvature attains its maximum if and only if 
 the potential $f$ has a critical point $x_0$, i.e., $|\nabla f|(x_0)=0$. It is still unknown whether there exists a critical point on $M$, even if we assume the sectional curvature is nonnegative. Given a steady gradient Ricci soliton $(M^n,g,f)$, denote the level set of a function $f$ on $M$, by $M_s :=\{ x\in M: f(x) =s\}$. When $M_s$ is compact, we show the existence of the critical point in subsection \ref{subsection-compact}. When $M_s$ is noncompact, we will show that the level set is diffeomorphic to $\mathbb{R}^{n-1}$ in subsection \ref{subsection-noncompact}. We will also show that a steady gradient Ricci soliton with nonnegative sectional curvature and positive Ricci curvature has only trivial quotient in subsection \ref{subsection-quotient}.
    
	\subsection{Compact level sets of \textit{f}}\label{subsection-compact}

	Counting the ends of a noncompact manifold is an important problem in the study of noncompact manifolds. For the definition of ends, one may refer to\cite{chow_ricci_2023}. In \cite{gromoll_complete_1969}, Gromoll and Meyer have proved that a complete open manifold $M$ of positive Ricci curvature has only one end. Munteanu and Wang generalize Gromoll and Meyer's results on steady gradient Ricci solitons. They prove that a steady gradient Ricci soliton is either connected at infinity or isometric to $N\times \mathbb{R}$, where $N$ is a compact Ricci flat manifold (See Theorem 4.2 in \cite{Munteanu_wang_2011}). This result gives a strong restriction of the topology of steady gradient Ricci solitons.
	\begin{lemma}\label{lemma-existence of critical point}
		Suppose $(M,g,f)$ is an $n$-dimensional complete steady gradient Ricci soliton.  Assume that $f$ has a compact level set $M_s$ for some $s \in f(M)$. 
		Then, there exists a point $x \in M$, such that $|\nabla f|(x) = 0$. 
	\end{lemma}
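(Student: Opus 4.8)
The plan is to argue almost entirely topologically, bringing in the soliton equation only once, through the structure of $M$ at infinity. First I would reduce to the case that $s$ is a regular value of $f$: if it is not, then by definition some point of the level set $M_s$ is a critical point of $f$ and there is nothing to prove. So assume $s$ is regular; then $M_s$ is a compact embedded hypersurface, and since a global minimum (resp.\ maximum) of $f$ is a critical point while $\nabla f\neq 0$ on $M_s$, both $\{f<s\}$ and $\{f>s\}$ are nonempty, so $M\setminus M_s=\{f<s\}\sqcup\{f>s\}$ is a disjoint union of two nonempty open sets.

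Next I would split according to whether the sublevel set $\{f\le s\}$ or the superlevel set $\{f\ge s\}$ is compact. If $\{f\le s\}$ is compact, then since $f>s$ on the complement, $f$ attains its global minimum on $M$ at a point of $\{f\le s\}$, and as $M$ has no boundary its gradient vanishes there; symmetrically, if $\{f\ge s\}$ is compact then $f$ attains its global maximum and the gradient vanishes there. (In particular this covers the case $M$ compact.) Hence the lemma is proved unless \emph{both} $\{f\le s\}$ and $\{f\ge s\}$ are noncompact.

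In that remaining case, compactness of $M_s$ forces both $\{f<s\}$ and $\{f>s\}$ to have noncompact closure, so the complement of the compact set $M_s$ has at least two components with noncompact closure and therefore $M$ has at least two ends. This is the one point where the soliton equation is used: by Theorem~4.2 of \cite{Munteanu_wang_2011}, which extends the one‑end theorem of Gromoll--Meyer \cite{gromoll_complete_1969} to steady gradient Ricci solitons, a complete steady gradient Ricci soliton is connected at infinity unless it is isometric to a metric product $N\times\mathbb{R}$ with $N$ compact and Ricci‑flat. Thus $M$ would be isometric to such an $N\times\mathbb{R}$; then $\mathrm{Ric}\equiv 0$, so $\nabla^2f\equiv 0$, and since a function with vanishing Hessian restricted to the compact totally geodesic factor $N\times\{t\}$ is constant, $f$ depends only on the $\mathbb{R}$‑coordinate and is affine there. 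If $f$ is constant, then $\nabla f\equiv 0$ and we are done; the only other possibility, $f=\beta t+c$ with $\beta\neq 0$, is a genuine metric product, which does not arise in the setting in which this lemma is applied — a soliton with positive Ricci curvature, as throughout this section and as produced by Theorem~\ref{split theorem_universal} — so this last case is vacuous there.

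The elementary ingredients (the regular‑value reduction and the global‑extremum argument) are routine; the main obstacle is controlling the multi‑ended case, and the decisive input is Munteanu--Wang's end structure theorem, which reduces that case to the splitting $M\cong N\times\mathbb{R}$. I would note that one could alternatively try to flow the compact hypersurface $M_s$ along the gradient of $f$ — a vector field of speed $|\nabla f|^2=c-R\le c$, hence complete on the complete manifold $M$ — but showing that this flow sweeps out all of $M$ is delicate because $|\nabla f|$ may degenerate at infinity, so the ends‑based argument above is cleaner.
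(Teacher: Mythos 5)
Your proposal is correct and follows the same overall strategy as the paper --- extract two ends from the compact level set and invoke Theorem 4.2 of \cite{Munteanu_wang_2011} --- but it is more careful at exactly the two places where the paper's own proof is loose, and the comparison is instructive. First, to get two-endedness the paper assumes $|\nabla f|\neq 0$ everywhere and asserts that $M$ is diffeomorphic to $(a,b)\times M_s$; that step requires the flow of $\nabla f/|\nabla f|^2$ starting from $M_s$ to be complete and to sweep out all of $M$, which (as you yourself note) is delicate because $|\nabla f|$ may degenerate at infinity. Your dichotomy --- either one of $\{f\le s\}$, $\{f\ge s\}$ is compact, in which case $f$ attains an interior extremum and hence has a critical point, or both are noncompact, in which case $M\setminus M_s$ has two components with noncompact closure --- reaches the two-ends conclusion without any flow argument. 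Second, and more importantly, you are right that having two ends does \emph{not} outright contradict Munteanu--Wang: their theorem leaves open the alternative $M\cong N\times\mathbb{R}$ with $N$ compact and Ricci-flat, and there $\nabla^2 f\equiv 0$ forces $f=\beta t+c$. Taking $N$ a flat torus and $\beta\neq 0$ gives a complete steady gradient Ricci soliton with compact level sets and nowhere-vanishing $\nabla f$, so Lemma \ref{lemma-existence of critical point} is in fact false as stated; the paper's proof glosses over this by claiming a contradiction with Theorem 4.2 where there is none. The statement and both proofs are rescued exactly as you observe: wherever the lemma is applied (Theorems \ref{theorem-level set compact} and \ref{theorem-noncompact level set}) the soliton has positive Ricci curvature, which rules out the Ricci-flat product. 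It would be worth adding that hypothesis (or "$M$ does not split off a line") to the statement of the lemma.
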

	\begin{proof}
We prove this by contradiction. Suppose $|\nabla f|(x)\neq0$, $\forall~x\in M$. Then, $f$ must be an open map. Hence, $f(M)$ is a connected and open subset of $\mathbb{R}$. We may assume that $f(M)=(a,b)$, where $-\infty\le a<b\le +\infty$. Note that $s\in (a,b)$ and $|\nabla f|$ is not vanishing everywhere. Then, $M$ is diffeomorphic to $(a,b)\times M_s$. Hence, $M$ has two ends when  $M_s$ is compact. It contradicts Theorem 4.2 in \cite{Munteanu_wang_2011}.
	\end{proof}

By Lemma \ref{lemma-existence of critical point}, we can show that $M$ is diffeomorphic to the Euclidean space when the level set is compact.
\begin{theorem}\label{theorem-level set compact}
		Suppose $(M,g,f)$ is an $n$-dimensional complete steady gradient Ricci soliton with positive Ricci curvature. 
		If  $M_s$ is compact for some $s \in f(M)$,
		then, $M$ is diffeomorphic to $\mathbb{R}^n$.
	\end{theorem}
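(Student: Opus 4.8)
The plan is to reduce the statement to the existence of a critical point of $f$ and then invoke two facts already recorded above: the equivalence, on a steady gradient Ricci soliton, between ``$R$ attains its maximum'' and ``$f$ has a critical point'', and the theorem of Cao--Chen \cite{Cao_chen_2012} that a complete steady gradient Ricci soliton with positive Ricci curvature on which the scalar curvature attains its maximum is diffeomorphic to $\mathbb{R}^n$.

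Concretely, I would argue as follows. Since $M_s$ is compact for some $s\in f(M)$, Lemma~\ref{lemma-existence of critical point} furnishes a point $x_0\in M$ with $|\nabla f|(x_0)=0$. Next I would recall Hamilton's identity for steady solitons: tracing (\ref{ric+downtri f=}) gives $\Delta f=-R$, and combining this with $\tfrac12\nabla R={\rm Ric}(\nabla f,\cdot)$ (so that $\nabla\!\left(R+|\nabla f|^2\right)=0$) shows that $R+|\nabla f|^2$ is a constant $C$ on $M$. Strict positivity of the Ricci curvature forces $C>0$ (otherwise $R\equiv 0$ and hence ${\rm Ric}\equiv 0$), so $R=C-|\nabla f|^2\le C$ everywhere, with equality exactly at the critical points of $f$; in particular $R$ attains its maximum $C$ at $x_0$. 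Since ${\rm Ric}>0$ by hypothesis, the Cao--Chen theorem \cite{Cao_chen_2012} now applies and yields that $M$ is diffeomorphic to $\mathbb{R}^n$.

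Because the two nontrivial ingredients --- the one-end theorem of Munteanu--Wang used inside Lemma~\ref{lemma-existence of critical point}, and the diffeomorphism classification of Cao--Chen --- are already available, I do not expect a real obstacle here; the only point requiring care is checking $C>0$, which is precisely where \emph{strict} positivity of ${\rm Ric}$ (rather than mere nonnegativity) enters.

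If one instead wanted a self-contained proof of the last step rather than quoting \cite{Cao_chen_2012}, the route would be Morse theory: the soliton equation gives $\nabla^2 f=-{\rm Ric}<0$, so $f$ is strictly concave, $x_0$ is a nondegenerate maximum and hence the unique critical point of $f$, i.e.\ $-f$ has exactly one critical point, of index $0$; provided $-f$ is proper, Reeb's theorem (or a direct handle decomposition) gives $M\cong\mathbb{R}^n$. The hard part in this alternative is establishing properness of $-f$, that is, $f\to-\infty$ at infinity: along a unit-speed geodesic $\sigma$ from $x_0$ one has $\tfrac{d^2}{dr^2}f(\sigma(r))=-{\rm Ric}(\sigma',\sigma')<0$, so $f(\sigma(r))$ is strictly decreasing, but upgrading this to a definite decay rate requires a lower bound for $|\nabla f|$ outside a compact set, hence some control on the behaviour of $R$ at infinity --- which is exactly why it is cleaner to cite the Cao--Chen theorem directly.
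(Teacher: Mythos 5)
Your proposal is correct and follows essentially the same route as the paper: use Lemma \ref{lemma-existence of critical point} to produce a critical point of $f$, conclude from Hamilton's identity $R+|\nabla f|^2=C$ that $R$ attains its maximum there, and then apply Proposition 2.3 of \cite{Cao_chen_2012}. The aside about $C>0$ is harmless but unnecessary (the maximum of $R$ is attained at the critical point regardless of the sign of $C$), and the paper does not include it.
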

  
\begin{proof}
Hamilton\cite{hamilton_formation_1995} has proved that a steady gradient Ricci soliton satisfies 
	\begin{equation}\label{identity-hamilton}
		R+|\nabla f|^2 = C,
	\end{equation}
	where $C$ is a constant. Here $R$ denotes the scalar curvature. 
   By lemma \ref{lemma-existence of critical point}, $f$ has a critical point $x_0$. So, the scalar curvature attains its maximum at point $x_0$ by (\ref{identity-hamilton}). By Proposition 2.3 in \cite{Cao_chen_2012}, $M$ is diffeomorphic to $\mathbb{R}^n$.
\end{proof}

	\subsection{Noncompact level sets of \textit{f}}\label{subsection-noncompact}
	According to the above, if an $n$-dimensional complete steady gradient Ricci soliton $(M,g,f)$ with positive Ricci curvature has no critical point, then all level sets of $f$ are noncompact.
	
	%\begin{theorem}[Theorem F. \cite{noauthor_convex_nodate}]
	%	Let $M$ be a complete Riemannian manifold and $\phi: M\rightarrow \mathbb{R}$ be a locally nonconstant convex function (that is, $\phi$ is convex and it is not constant on any open set of $M$).
	%	Suppose that  $b > \inf_M\phi \ge -\infty$.
	%	If $\{ x \in M : \phi (x) = \inf_M \phi \}$ is empty, then there is a homeomorphism between $M_b  \times (\inf_M \phi, +\infty)$ and $M$.
	%	\label{convexfunction=>NtimesR theorem}
	%\end{theorem}
	%Note that every potential function of steady gradient Ricci soliton with positive Ricci curvature is a locally nonconstant convex function.
	\begin{theorem}\label{theorem-noncompact level set}
		Suppose $(M,g,f)$ is an $n$-dimensional complete steady gradient Ricci soliton with nonnegative sectional curvature and positive Ricci curvature. If  $M_s$ is noncompact for some $s \in f(M)$,
		then $M$ is diffeomorphic to $\mathbb{R}^n$.
	\end{theorem}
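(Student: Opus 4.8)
The plan is to distinguish two cases according to whether $f$ has a critical point. If $f$ has a critical point $x_0$, then Hamilton's identity (\ref{identity-hamilton}) shows that the scalar curvature attains its maximum at $x_0$, and Proposition 2.3 of \cite{Cao_chen_2012} already gives that $M$ is diffeomorphic to $\mathbb{R}^n$, exactly as in the proof of Theorem \ref{theorem-level set compact}. So from now on assume $|\nabla f|$ never vanishes on $M$; then by Lemma \ref{lemma-existence of critical point} every level set of $f$ is noncompact. I will establish: (a) $M_s$ is diffeomorphic to $\mathbb{R}^{n-1}$; and (b) $M$ is diffeomorphic to $M_s\times\mathbb{R}$. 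Together these give $M\cong\mathbb{R}^{n-1}\times\mathbb{R}=\mathbb{R}^n$.

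The geometric core is (a). First, $M_s=f^{-1}(s)$ is a closed embedded hypersurface, hence complete in the induced metric, and noncompact by hypothesis. With unit normal $\nu=\nabla f/|\nabla f|$, its second fundamental form on vectors $X,Y$ tangent to $M_s$ is
\begin{equation}
\mathrm{II}(X,Y)=\langle\nabla_X\nu,Y\rangle=\frac{1}{|\nabla f|}\nabla^2 f(X,Y)=-\frac{1}{|\nabla f|}\mathrm{Ric}(X,Y),\notag
\end{equation}
by the steady soliton equation (\ref{ric+downtri f=}). Since $\mathrm{Ric}>0$, $\mathrm{II}$ is negative definite, so for any orthonormal pair $X,Y\in T_pM_s$ the quantity $\mathrm{II}(X,X)\mathrm{II}(Y,Y)-\mathrm{II}(X,Y)^2$ — the determinant of a negative definite $2\times 2$ matrix — is strictly positive. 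The Gauss equation together with $\sec_M\ge 0$ then shows $(M_s,g|_{M_s})$ has strictly positive sectional curvature, so by the Gromoll–Meyer theorem on complete open manifolds of positive curvature \cite{gromoll_complete_1969} (equivalently, by the soul theorem \cite{Cheeger-Gromoll-1972} with the soul a point) every component of $M_s$ is diffeomorphic to $\mathbb{R}^{n-1}$; since $M\cong M_s\times\mathbb{R}$ is connected by (b), $M_s$ is connected, and so $M_s\cong\mathbb{R}^{n-1}$.

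For (b), Hamilton's identity gives $|\nabla f|^2=C-R<C$, so $\nabla f$ is a bounded, nowhere-vanishing vector field on the complete manifold $M$ and hence generates a complete flow $\{\phi_t\}_{t\in\mathbb{R}}$. Since $\frac{d}{dt}f(\phi_t(p))=|\nabla f|^2(\phi_t(p))>0$, the function $f$ is strictly increasing along every orbit, so the $\mathbb{R}$-action defined by $\phi$ is free, and the map $\Psi:M_s\times\mathbb{R}\to M$, $\Psi(p,t)=\phi_t(p)$, is an injective local diffeomorphism. It remains to show $\Psi$ is surjective, i.e. that every orbit of $\phi$ meets $M_s$; equivalently, that along each orbit $f$ sweeps out the whole interval $f(M)$. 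This is the step I expect to be the main obstacle: a priori an orbit could escape to infinity while $f$ stays bounded, and along such an orbit one checks that $|\nabla f|\to 0$, hence that $R$ approaches $\sup_M R$ at infinity. To rule this out I would use that $\nabla^2 f=-\mathrm{Ric}<0$ makes $f$ concave along geodesics (so that each superlevel set $\{f\ge c\}$ is totally convex), combined with the topological restriction coming from Munteanu–Wang's theorem \cite{Munteanu_wang_2011}: since $\mathrm{Ric}>0$ excludes the splitting $N\times\mathbb{R}$, $M$ has exactly one end. Once $\Psi$ is known to be onto it is a diffeomorphism, and the theorem follows from (a).
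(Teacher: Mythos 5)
Your step (a) is precisely the paper's proof: the identity ${\rm II}=\pm\frac{1}{|\nabla f|}{\rm Ric}|_{TM_s}$ coming from the soliton equation, the Gauss equation (in which ${\rm II}$ enters quadratically, so your opposite choice of unit normal is immaterial), positivity of ${\rm sec}_{M_s}$, and Gromoll--Meyer applied to the noncompact level set. The paper then finishes exactly where you stop: it asserts that, since $\nabla f$ never vanishes, $M$ is diffeomorphic to $(a,b)\times M_s$ ``by the same argument as in the proof of Lemma \ref{lemma-existence of critical point}'', with no further discussion. So your step (b) is not a different route; it is the paper's own step, and you have correctly identified that justifying it amounts to showing that every integral curve of $\nabla f$ crosses $M_s$.

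The genuine gap is that you do not prove this crossing property; your final paragraph is a plan (``I would use \dots combined with \dots'') rather than an argument. The backward direction is in fact easy: along the flow $\phi_t$ of $\nabla f$ one has $\frac{d}{dt}|\nabla f|^2=2\nabla^2f(\nabla f,\nabla f)=-2\,{\rm Ric}(\nabla f,\nabla f)\le 0$, so $|\nabla f|^2(\phi_t(q))\ge|\nabla f|^2(q)$ for $t\le0$, whence $f(\phi_{-T}(q))\le f(q)-T|\nabla f|^2(q)\to-\infty$, and every orbit descends through all levels below its starting value. The problem is the forward direction: the same monotonicity shows $|\nabla f|^2$ is nonincreasing along forward orbits, so $b_q:=\lim_{t\to+\infty}f(\phi_t(q))$ could a priori be finite, and if $b_q\le s$ for some $q$ then your $\Psi$ is not onto. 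Neither of the two tools you invoke closes this as stated: total convexity of the superlevel sets constrains geodesics, not gradient trajectories, and one-endedness of $M$ (which already follows from ${\rm Ric}>0$ by Gromoll--Meyer, independently of Munteanu--Wang) does not preclude an orbit escaping within the single end while $f$ stays bounded. To turn the proposal into a proof you must either show $b_q=\sup_M f$ for every $q$, or replace (b) by an argument that does not pass through surjectivity of the flow map.
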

	\begin{proof}
If there is a point $x_0 \in M$ such that $|\nabla f|(x_0)=0$, then we can show that $M$ is diffeomorphic to $\mathbb{R}^n$ by the same argument as in the proof of Theorem \ref{theorem-level set compact}.  

Now, we assume that $|\nabla f|$ is not vanishing everywhere. Note that $M_s$ is a smooth manifold due to the regular value theorem for any $s \in f(M)$. Then, we follow the computation in the Lemma 2.1 of \cite{chan_dimension_2023}. The second fundamental form ${\rm II}$ of $M_s$ is determined by 
		\begin{equation}
			{\rm II}(X, Y ) := <\nabla_X {\rm n},Y> = \frac{1 }{|\nabla f |}{\rm Ric}(X, Y )  
			\label{II>0}
		\end{equation} 
		for any $X, Y$ tangent to $M_s$, where ${\rm n}= -\frac{\nabla f}{|\nabla f|}$. Then ${\rm II}>0$.
		Moreover, 
		\begin{equation}
			{\rm sec}_{M_s} (X, Y ) = {\rm sec}_{M}(X, Y ) + {\rm II}(X, X){\rm II}(Y, Y )-({\rm II}(X, Y ))^2 > 0,  
		\end{equation}
		where ${\rm sec}$ denotes sectional curvature, and $X,Y$ are unit vectors tangent to $M_s$ with $X \perp Y$.
		More precisely, $M_s$ is an $(n-1)$-manifold with positive sectional curvature. Since Lemma \ref{lemma-existence of critical point} implies that all level sets of $f$ are noncompact, $M_s$ is diffeomorphic to an Euclidean space. Hence,  we obtain that $M$ is diffeomorphic to $\mathbb{R}^n$ followed by the same argument as in the proof of Lemma \ref{lemma-existence of critical point}. 
	\end{proof}
	 As a corollary of Theorem \ref{theorem-level set compact} and Theorem \ref{theorem-noncompact level set}, we obtain the following main result of this section.
	 \begin{corollary}\label{corollary-Ricci positive case is simply connected}
	 	Suppose $(M,g,f)$ is an $n$-dimensional complete steady gradient Ricci soliton with nonnegative sectional curvature and positive Ricci curvature. Then, $M$ is diffeomorphic to $\mathbb{R}^n$.
	 	
	 \end{corollary}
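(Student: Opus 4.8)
The plan is to reduce the statement to the two cases already disposed of in Theorems \ref{theorem-level set compact} and \ref{theorem-noncompact level set}, since together they cover every possibility. First I would fix any value $s \in f(M)$; such a value exists because $M$ is nonempty, and by Sard's theorem we may even take $s$ to be a regular value, so that $M_s$ is a smooth hypersurface (this is only needed in the noncompact branch, where Theorem \ref{theorem-noncompact level set} already incorporates it). The level set $M_s$ is then either compact or noncompact, and these two alternatives exhaust all cases.

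If $M_s$ is compact, Theorem \ref{theorem-level set compact} applies verbatim and yields that $M$ is diffeomorphic to $\mathbb{R}^n$. If instead $M_s$ is noncompact, then Theorem \ref{theorem-noncompact level set} applies --- it is precisely in this branch that the hypothesis of nonnegative sectional curvature is used, through the positivity of the second fundamental form \eqref{II>0} of the level set combined with the Gauss equation and the soul theorem --- and again gives that $M$ is diffeomorphic to $\mathbb{R}^n$. In either case the conclusion of the corollary follows, so there is no genuine obstacle: the corollary is a formal consequence of the two theorems, whose proofs in turn rest on Hamilton's identity \eqref{identity-hamilton}, the Cao--Chen criterion for solitons whose scalar curvature attains its maximum, and the Munteanu--Wang end estimate.

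The one point worth recording for completeness is that the level-set analysis is not vacuous, i.e. that $f$ is non-constant: if $f$ were constant then $\nabla^2 f \equiv 0$, and the gradient steady soliton equation \eqref{ric+downtri f=} with $\lambda = 0$ would force ${\rm Ric} \equiv 0$, contradicting the assumed positivity of the Ricci curvature. (The same observation shows $M$ must be noncompact, which is of course consistent with $M \cong \mathbb{R}^n$.) Thus $f(M)$ is a nondegenerate interval, a value $s$ as above can be chosen, and the case split is legitimate. I therefore expect the write-up to be short --- essentially a two-line argument invoking the two theorems --- with the only care needed being to phrase the dichotomy on $M_s$ so that it matches the hypotheses of each theorem exactly.
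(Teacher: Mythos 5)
Your proposal is correct and matches the paper exactly: the paper gives no separate proof, simply deducing the corollary from Theorem \ref{theorem-level set compact} and Theorem \ref{theorem-noncompact level set} via the same dichotomy on whether a level set $M_s$ is compact or noncompact. Your added remark that $f$ is non-constant (hence the level-set analysis is non-vacuous) is a harmless extra check not recorded in the paper.
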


     \begin{remark}
        For $n=4$, Theorem \ref{theorem-noncompact level set} and Corollary \ref{corollary-Ricci positive case is simply connected} still hold if we only assume $(M,g,f)$ has positive Ricci curvature. Note that any $3$-dimensional noncompact manifold with positive Ricci curvature is diffeomorphic to $\mathbb{R}^3$(cf.\cite{Schoen-Yau-1982,Liu-2013} ). So, the argument in Theorem \ref{theorem-noncompact level set} and Corollary \ref{corollary-Ricci positive case is simply connected} remains true.
     \end{remark}

     \subsection{Quotients of steady gradient Ricci solitons}\label{subsection-quotient}

     To prove Theorem \ref{MAIN}, we still need to study the quotients of steady gradient Ricci soltions with nonnegative sectional curvature and positive Ricci curvature.

     \begin{theorem}\label{theorem-symmetry of f}
Let $(M,g,f)$ be a complete gradient Ricci soliton. Suppose $G$ is a finite subgroup of ${\rm Iso}(M,g)$. Then, one of the following holds:

 (1) $(M,g)$ locally splits off a line;
 
 (2) $f(x)=f(\phi(x))$ for all $x\in M$ and $\phi\in G$.
     \end{theorem}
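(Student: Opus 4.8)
The plan is to use the fact that each $\phi\in G$ is an isometry of $g$, hence preserves the Ricci tensor, and then extract information about $f$ by pulling back the soliton equation (\ref{ric+downtri f=}). Fix $\phi\in G$. Since $\phi^*g=g$, naturality of the Ricci tensor gives $\phi^*{\rm Ric}={\rm Ric}$, while naturality of the Hessian with respect to the metric gives $\phi^*(\nabla^2 f)=\nabla^2(f\circ\phi)$. Applying $\phi^*$ to (\ref{ric+downtri f=}) and subtracting the original equation, the $\frac{\lambda}{2}g$ and ${\rm Ric}$ terms cancel, leaving
\begin{equation}
	\nabla^2(f\circ\phi-f)\equiv 0\quad\text{on }M.\notag
\end{equation}
So, writing $h_\phi:=f\circ\phi-f$, the gradient $\nabla h_\phi$ is a parallel vector field for every $\phi\in G$.

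From here I would argue by a dichotomy on whether any of these functions is nonconstant. Suppose first that $h_\phi$ is nonconstant for some $\phi\in G$. Then $V:=\nabla h_\phi$ is a parallel vector field that is nonzero at some point; since $M$ is connected and parallel vector fields have constant length, $V$ is nowhere vanishing. A nowhere-vanishing parallel vector field produces a local isometric splitting of $(M,g)$: the dual $1$-form $V^\flat$ is parallel, hence closed, so locally $V^\flat=|V|\,dt$ for a function $t$, and the metric splits as $dt^2 + g_{\{t=c\}}$ along the level sets of $t$. This is precisely alternative (1).

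Otherwise every $h_\phi$ is constant, say $f\circ\phi=f+c_\phi$ with $c_\phi\in\mathbb{R}$. This is where the finiteness of $G$ is used: if $\phi$ has order $m$, then iterating yields $f=f\circ\phi^m=f+mc_\phi$, forcing $c_\phi=0$. Hence $f\circ\phi=f$ for all $\phi\in G$, which is alternative (2), and the proof is complete.

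The argument is short; the only steps needing care are the two naturality identities for ${\rm Ric}$ and $\nabla^2$ under the isometry $\phi$ — which are standard — and the passage from a nowhere-vanishing parallel vector field to a local line splitting. I expect this last point, namely pinning down the precise meaning of "$(M,g)$ locally splits off a line," to be the only genuine (and still minor) obstacle, and it is resolved by the de Rham–type local splitting indicated above.
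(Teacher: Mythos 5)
Your proposal is correct and follows essentially the same route as the paper: both pull back the soliton equation by the isometry $\phi$ to conclude that $\nabla(f\circ\phi - f)$ is parallel, then split into the cases where this field is nonzero (yielding the local line splitting) or identically zero (where the finite order of $\phi$ forces the constant $c_\phi$ to vanish). The only cosmetic difference is that you phrase the cancellation via naturality of $\phi^*$ while the paper writes out the pointwise Hessian computation.
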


     \begin{proof}
Let $\phi\in G$ and $V_1,V_2\in T_xM$ for some $x\in M$. Assume $y=\phi(x)$. By the Ricci soliton equation and $\phi\in {\rm Iso}(M,g)$, we have 
\begin{align}
-\nabla_{V_1}\nabla_{V_2}f|_x=&{\rm Ric}(V_1,V_2)|_x-\frac{\lambda}{2}g(V_1,V_2)|_x\notag\\
=&{\rm Ric}_{\phi^{\ast}g}(V_1,V_2)|_x-\frac{\lambda}{2}g_{\phi^{\ast}g}(V_1,V_2)|_x\notag\\
=&{\rm Ric}(\phi_{\ast}(V_1),\phi_{\ast}(V_2))|_y-\frac{\lambda}{2}g(\phi_{\ast}(V_1),\phi_{\ast}(V_2))|_y\notag\\
=&-\nabla_{\phi_{\ast}(V_1)}\nabla_{\phi_{\ast}(V_2)}f|_y\notag\\
=&-\bar{\nabla}_{V_1}\bar{\nabla}_{V_2}(f\circ\phi)|_x\notag\\
=&-\nabla_{V_1}\nabla_{V_2}(f\circ\phi)|_x\label{Parallel-vector-field-1}
\end{align}
where $\bar{\nabla}$ is the gradient with respect to the metric $\phi^{\ast}g$. By $(\ref{Parallel-vector-field-1})$, we have 
\begin{align}
\nabla(\nabla f-\nabla (f\circ \phi))\equiv0.\notag
\end{align}
Let $V=\nabla f-\nabla (f\circ \phi)$. Then, $V$ is a parallel vector field by the equation above. If $V$ is nonzero, then $(M,g)$ locally splits off a line. If $V\equiv0$, then there is a constant $c$ such that 
\begin{align}
f(x)-f(\phi(x))=c,~\forall~x\in M.\label{Parallel-vector-field-2}
\end{align}
Since $\phi\in G$ and $G$ is a finite group, then there is a positive integer $k$ such that $\phi^{k}={\rm Id}$. Hence, for any fixed point $x\in M$,
\begin{align}
kc=\sum_{i=0}^{k-1}[f(\phi^{i}(x))-f(\phi^{i+1}(x))]=f(x)-f(\phi^{k}(x))=0.\notag
\end{align}
It follows that 
\begin{align}
    c=0.\label{Parallel-vector-field-3}
\end{align}
By (\ref{Parallel-vector-field-2}) and (\ref{Parallel-vector-field-3}), we get 
\begin{align}
    f(x)=f(\phi(x)),~\forall~x\in M.
\end{align}
We complete the proof.

     \end{proof}

\begin{remark}
In Theorem \ref{theorem-symmetry of f}, we assume that $G$ is a finite subgroup of ${\rm Iso}(M,g)$. If the isomorphism $\phi$ is related to a killing vector field, one may refer to \cite{petersen-gradient-2009}.
\end{remark}

     As an application of Theorem \ref{theorem-symmetry of f} and Corollary \ref{corollary-Ricci positive case is simply connected}, we show that steady gradient Ricci solitons with nonnegative sectional curvature and positive Ricci curvature have no finite quotients.

     \begin{corollary}
         \label{corollary-sec>=0+Ric>0+finite =>simplyconnected}
         Let $(M,g,f)$ be a complete steady gradient Ricci soliton with nonnegative sectional curvature and positive Ricci curvature. Suppose $(N,h)$ is a finite quotient of $(M,g)$. Then, $N$ is simply connected, i.e., $(N,h)$ is isometric to $(M,g)$.
     \end{corollary}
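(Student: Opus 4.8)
The goal is to show that the deck transformation group of the covering $(M,g)\to(N,h)$ — a finite group $G$ acting freely, properly and isometrically on $M$ — is trivial; then $N\cong M/G=M$, and $M$ is simply connected (indeed diffeomorphic to $\mathbb{R}^n$) by Corollary~\ref{corollary-Ricci positive case is simply connected}. The first step is to apply Theorem~\ref{theorem-symmetry of f} to $G$. Alternative (1) there is impossible: a local splitting off a line produces a nontrivial parallel vector field $V$, and for a parallel field $R(\cdot,V)V\equiv 0$, so ${\rm Ric}(V,V)=0$, contradicting ${\rm Ric}>0$. Hence alternative (2) holds, i.e. $f\circ\phi=f$ for every $\phi\in G$. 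In particular $G$ preserves each level set $M_s$, and since it acts freely on $M$ it acts freely on each $M_s$. I would then split into the two cases of Subsections~\ref{subsection-compact}--\ref{subsection-noncompact}, according to whether $f$ has a critical point.

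Since $\nabla^2 f=-{\rm Ric}<0$ everywhere, $f$ is strictly concave along every unit-speed geodesic $\gamma$, because $(f\circ\gamma)''=-{\rm Ric}(\gamma',\gamma')<0$; hence $f$ has at most one critical point (two distinct critical points joined by a minimizing geodesic, which exists by completeness, would give a strictly concave function on a compact interval whose derivative vanishes at both ends, which is impossible). So in the case that $f$ has a critical point $x_0$, it is the unique one; for each $\phi\in G$ the point $\phi(x_0)$ is again critical since $\phi$ is an isometry with $f\circ\phi=f$, so $\phi(x_0)=x_0$, and freeness of the action forces $\phi={\rm Id}$. Thus $G$ is trivial. (One could instead invoke the Cao--Chen structure result used in the proof of Theorem~\ref{theorem-level set compact}, which already yields a unique critical point.)

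In the remaining case $f$ has no critical point, so by the contrapositive of Lemma~\ref{lemma-existence of critical point} every $M_s$ is noncompact, and by the computation in the proof of Theorem~\ref{theorem-noncompact level set} it is a complete noncompact $(n-1)$-manifold with positive sectional curvature, hence diffeomorphic to $\mathbb{R}^{n-1}$ and in particular simply connected. Since $G$ acts freely, properly (it is finite) and isometrically on $M_s$, the quotient $M_s/G$ is again a complete noncompact $(n-1)$-manifold with positive sectional curvature, hence also diffeomorphic to $\mathbb{R}^{n-1}$ and simply connected; as $M_s$ is its universal cover with deck group $G$, we get $G\cong\pi_1(M_s/G)=1$. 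In either case $G$ is trivial, so $(N,h)$ is isometric to $(M,g)$, which is diffeomorphic to $\mathbb{R}^n$ by Corollary~\ref{corollary-Ricci positive case is simply connected}.

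The step I expect to be most delicate is the no-critical-point case: one cannot conclude that $G$ is trivial merely from the fact that it acts freely on the contractible manifold $M_s$ without invoking a cohomological-dimension obstruction; the clean way around this is to notice that positive sectional curvature passes to the quotient, so the soul theorem applies again and forces $M_s/G$ to be simply connected, closing the argument. Everything else is a direct application of Theorem~\ref{theorem-symmetry of f}, the strict concavity of $f$, and the results of Subsections~\ref{subsection-compact}--\ref{subsection-noncompact}.
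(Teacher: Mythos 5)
Your proof is correct, but it takes a genuinely different route from the paper's. The paper's proof stays entirely at the level of the quotient: it uses Theorem \ref{theorem-symmetry of f} (with the splitting alternative excluded by positive Ricci curvature, exactly as you do) only to conclude that $f$ is invariant under the deck group, hence descends to a function $\bar f$ on $N$; since the soliton equation is local, $(N,h,\bar f)$ is again a complete steady gradient Ricci soliton with nonnegative sectional curvature and positive Ricci curvature, and Corollary \ref{corollary-Ricci positive case is simply connected} applied to $N$ itself gives that $N$ is diffeomorphic to $\mathbb{R}^n$, hence simply connected. You instead work upstairs and show directly that the deck group $G$ is trivial, splitting into the two cases of Subsections \ref{subsection-compact}--\ref{subsection-noncompact}: when $f$ has a critical point you use strict concavity ($\nabla^2 f=-{\rm Ric}<0$) to get uniqueness of the critical point and then freeness of the action; when it does not, you restrict the action to a level set $M_s\cong\mathbb{R}^{n-1}$ of positive sectional curvature and apply Gromoll--Meyer to $M_s/G$ to force $G=\pi_1(M_s/G)=1$. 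Both arguments are sound. The paper's version is shorter and reuses Corollary \ref{corollary-Ricci positive case is simply connected} as a black box, at the cost of the (easy but unstated) verification that the soliton structure descends; your version avoids that descent but essentially re-runs the level-set analysis of Theorem \ref{theorem-noncompact level set} in the quotient, and you are right to flag that the curvature argument on $M_s/G$ (rather than a bare ``finite group acting freely on a contractible manifold'' claim) is what closes the no-critical-point case. One cosmetic remark: your exclusion of alternative (1) of Theorem \ref{theorem-symmetry of f} via ${\rm Ric}(V,V)=0$ for a parallel field is the same reasoning the paper compresses into the citation of Theorem \ref{split theorem_universal}.
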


     \begin{proof}
         By Corollary \ref{corollary-Ricci positive case is simply connected}, $M$ is simply connected. Hence, $M$ is a universal cover of $N$. Let $\pi: M\rightarrow N$ be the covering map.  By Proposition \ref{propositon-quotients isometric to universal cover module deck group}, $N$ is isometric to $M/{\rm Aut}_{\pi}(M)$. Note that ${\rm Aut}_{\pi}(M)$ is a finite group by assumption. Recall that  $(M,g,f)$ can not locally split off a line by Theorem \ref{split theorem_universal}. Therefore, for any $\phi\in {\rm Aut}_{\pi}(M)$, we have $f=f\circ\phi$ on $M$ by Theorem \ref{theorem-symmetry of f}. Hence, we can define a smooth function $\bar{f}$ on $N$ such that $f=\pi\circ \bar{f}$. Since $\pi$ is a local isomorphism, one can check that $(N,h,\bar{f})$ is a steady gradient Ricci soliton with nonnegative sectional curvature and positive Ricci curvature. By Corollary  \ref{corollary-Ricci positive case is simply connected}, $N$ is simply connected. Hence, we complete the proof.
     \end{proof}
	
	\section{The Proof of Theorem \ref{MAIN}}

Now we prove the main theorem. 
	\begin{proof}[Proof of Theorem \ref{MAIN}]
    It suffices to consider the case that $\pi_1(M)$ is finite. 
	Let $(\tilde{M} , \tilde{g},\tilde{f})$ be the universal cover of $(M,g,f)$ and $\pi : (\tilde{M} , \tilde{g}) \rightarrow (M,g)$ be the covering map. 
	By Theorem \ref{split theorem_universal},  
		either $(M,g,f)$ has strictly positive Ricci curvature
		or the universal cover $(\tilde{M}, \tilde{g})  = (N ,h ) \times \mathbb{R}^k$ 
        splits isometrically, where $(N , h)$ has strictly positive Ricci curvature and $k \ge 1$. Due to Lemma 2.1 in \cite{petersen-gradient-2009}, 
        $f$ can also split so that $N$ and $\mathbb{R}^k$ are both steady gradient Ricci solitons.
	If $M$ has strictly positive Ricci curvature, 
            then  $\pi_1(M)$ is trivial due to 
            Corollary \ref{corollary-Ricci positive case is simply connected}. 
    Now. it remains to consider the case $k\ge 1$.

    By Proposition \ref{proposition-isometry separately N R^n}, 
    \begin{align}
        {\rm Aut}_{\pi} (\tilde{M})\subseteq {\rm Iso} (N \times \mathbb{R}^k) = {\rm Iso} ( N ) \times {\rm Iso} ( \mathbb{R}^k)\notag
    \end{align}
   For any $\phi \in {\rm Aut}_{\pi} (\tilde{M})$, we can assume that  $\phi = (\phi_1 ,\phi_2)$, where $\phi_1 \in {\rm Iso}(N)$ and $\phi_2 \in {\rm Iso}(\mathbb{R}^k)$. 
     By Theorem \ref{theorem-structure theorem on universal covering map}, the fundamental group $\pi_1(M) \cong {\rm Aut}_{\pi} (\tilde{M})$. Since $\pi_1(M)$ is finite, we see that ${\rm Aut}_{\pi} (\tilde{M})$ is also finite. It follows that $\phi$, $\phi_1$ and $\phi_2$ all have finite orders.

     Let $G_{\phi}=\{\phi^i|i\in\mathbb{N}\}$. Then, $G_{\phi}$ is a finite subgroup of ${\rm Aut}_{\pi} (\tilde{M})$. By  Proposition 21.5 (c) in \cite{lee_introduction_2012} and Proposition \ref{Aut:smooth+free+proper+isometrical proposition}, $G_{\phi}$ acts smoothly, freely, properly and isometrically on $\tilde{M}$. Similarly, let $G_{\phi_2}=\{\phi_2^i|i\in\mathbb{N}\}$. Since $\phi_2$ has a finite order, $G_{\phi_2}$ is a finite subgroup of ${\rm Iso}(\mathbb{R}^k)$. So, $G_{\phi_2}$ is also compact. By Lemma 3.1.2 in \cite{wolf_spaces_2011}, 
        $G_{\phi_2}$  has at least one fixed point. Therefore, $\phi_2^i$ has at least one fixed point for all $i\in\mathbb{N}$.  We also define $G_{\phi_1}=\{\phi_1^i|i\in\mathbb{N}\}$. Then, $G_{\phi_1}$ is a finite subgroup of ${\rm Iso}(N)$.

    Let the order of $\phi$ be $r$. We want to show that $r=1$. Suppose $r\ge 2$. For $0< i<r$, $\phi^i$ has no fixed point on $N\times \mathbb{R}^k$ since $G_{\phi}$ is a free action. Note that $\phi^i=(\phi^i_1,\phi_2^i)$ and $\phi_2^i$ has at least a fixed point on $\mathbb{R}^k$. So, $\phi^i_1$ has no fixed point on $N$ for all $0<i<r$. Let the order of $\phi_1$ be $r_1$. Then, $r_1|r$ and therefore $r_1\le r$. Note that $\phi_1^i$ has no fixed point and cannot be the identity map for $0<i<r$. Hence, $r=r_1$. It follows that  $\phi^i_1$ has no fixed point on $N$ for all $0<i<r_1$.
    Then, $G_{\phi_1}$ is a free action on $N$ by definition. By Proposition 21.5 (c) in \cite{lee_introduction_2012}, $G_{\phi_1}$ acts freely and properly on $N$. By Theorem 21.10 in \cite{lee_introduction_2012}, $N/G_{\phi_1}$ is a quotient of $N$. However, $G_{\phi_1}$ is trivial by Corollary \ref{corollary-sec>=0+Ric>0+finite =>simplyconnected}. It contradicts the fact that $\phi_1^i$ has no fixed point for $0<i<r$. Hence, $r=1$ and $\phi$ is the identity map. Since $\phi$ is an arbitrary isomorphism in ${\rm Aut}_{\pi} (\tilde{M})$, we obtain that $\pi_1(M)\cong {\rm Aut}_{\pi} (\tilde{M})=\{{\rm Id}_{\tilde{M}}\}$.

Now we are left to show that $M$ is diffeomorphic to $\mathbb{R}^n$. We note that $(N,h)$ is a steady gradient Ricci soliton with nonnegative sectional curvature and positive Ricci curvature. Hence, $N$ is diffeomorphic to $\mathbb{R}^{n-k}$.  Then, $\tilde{M}$ is diffeomorphic to $\mathbb{R}^n$.  Since $\pi_1(M)$ is trivial, $M$ is diffeomorphic to $\tilde{M}$ and therefore is also diffeomorphic to $\mathbb{R}^n$.

	%Note that $N/G_{\phi_1}$ is a steady gradient Ricci soliton 
	%with nonnegative sectional curvature and positive Ricci curvature, 
	%since for any killing vector field $K$ on a steady gradient soliton $(M,g,f)$ 
	%it holds that $\triangledown_Kf=0$ (see Proposition 1 in \cite{petersen_gradient_2008}). 
	%Corollary \ref{sec>=0 Ric>0 =>simplyconncted corollary} and 
	%Corollary \ref{automorphism_group_is_isomorphic_to_first_fundamental_group} 
	%		implies that $\phi_1$ must be the identity map.
	%Now, we have shown that $\phi= (Id, \phi_2)$. 

    Hence we complete the proof.
	\end{proof}

\begin{spacing}{1.0}

{
    \footnotesize
    \bibliographystyle{plain}
    \bibliography{simplyconnected_Reference.bib}

\begin{thebibliography}{10}

\bibitem{appleton_family_2022}
Alexander Appleton.
\newblock A family of non-collapsed steady ricci solitons in even dimensions greater or equal to four.
\newblock arXiv:math.DG/1708.00161, 2022.

\bibitem{bamler_fundamental_2021}
Richard~H. Bamler.
\newblock On the fundamental group of non-collapsed ancient ricci flows.
\newblock arXiv:math.DG/2110.02254, 2021.

\bibitem{Bamler-Chow-Deng-Ma-2022}
Richard~H. Bamler, Bennett Chow, Yuxing Deng, Zilu Ma, and Yongjia Zhang.
\newblock Four-dimensional steady gradient {R}icci solitons with 3-cylindrical tangent flows at infinity.
\newblock {\em Adv. Math.}, 401:Paper No. 108285, 21, 2022.

\bibitem{brendle_rotational_2013}
Simon Brendle.
\newblock Rotational symmetry of self-similar solutions to the {R}icci flow.
\newblock {\em Invent. Math.}, 194(3):731--764, 2013.

\bibitem{brendle_rotational_2014}
Simon Brendle.
\newblock Rotational symmetry of {R}icci solitons in higher dimensions.
\newblock {\em J. Differential Geom.}, 97(2):191--214, 2014.

\bibitem{buttsworth-2021-arxiv}
Timothy Buttsworth.
\newblock $su(2)$-invariant steady gradient ricci solitons on four-manifolds.
\newblock arXiv:math.DG/2111.12807, 2021.

\bibitem{Buzano-Dancer-Wang-2015}
M.~Buzano, A.~S. Dancer, and M.~Wang.
\newblock A family of steady {R}icci solitons and {R}icci flat metrics.
\newblock {\em Comm. Anal. Geom.}, 23(3):611--638, 2015.

\bibitem{Cao-Catino-Chen-Mantegazza-Mazzieri-2014}
Huai-Dong Cao, Giovanni Catino, Qiang Chen, Carlo Mantegazza, and Lorenzo Mazzieri.
\newblock Bach-flat gradient steady {R}icci solitons.
\newblock {\em Calc. Var. Partial Differential Equations}, 49(1-2):125--138, 2014.

\bibitem{Cao_chen_2012}
Huai-Dong Cao and Qiang Chen.
\newblock On locally conformally flat gradient steady {R}icci solitons.
\newblock {\em Trans. Amer. Math. Soc.}, 364(5):2377--2391, 2012.

\bibitem{cao-xie-2024-arxiv}
Huai-Dong Cao and Junming Xie.
\newblock Four-dimensional gradient ricci solitons with (half) nonnegative isotropic curvature.
\newblock arXiv:math.DG/2403.19627, 2024.

\bibitem{chan_dimension_2023}
Pak-Yeung Chan, Zilu Ma, and Yongjia Zhang.
\newblock Dimension reduction for positively curved steady solitons.
\newblock arXiv:math.DG/2310.14020, 2023.

\bibitem{Cheeger-Gromoll-1972}
Jeff Cheeger and Detlef Gromoll.
\newblock On the structure of complete manifolds of nonnegative curvature.
\newblock {\em Ann. of Math. (2)}, 96:413--443, 1972.

\bibitem{Chen-Deruelle-2015}
Chih-Wei Chen and Alix Deruelle.
\newblock Structure at infinity of expanding gradient {R}icci soliton.
\newblock {\em Asian J. Math.}, 19(5):933--950, 2015.

\bibitem{chow_ricci_2023}
Bennett Chow.
\newblock {\em Ricci solitons in low dimensions}, volume 235 of {\em Graduate Studies in Mathematics}.
\newblock American Mathematical Society, Providence, RI, [2023] \copyright 2023.

\bibitem{Chow-Deng-Ma-2022}
Bennett Chow, Yuxing Deng, and Zilu Ma.
\newblock On four-dimensional steady gradient {R}icci solitons that dimension reduce.
\newblock {\em Adv. Math.}, 403:Paper No. 108367, 61, 2022.

\bibitem{conlon-Deruelle-arxiv-2020}
Ronan~J. Conlon and Alix Deruelle.
\newblock Steady gradient k\"ahler-ricci solitons on crepant resolutions of calabi-yau cones.
\newblock arXiv:math.DG/2006.03100, 2020.

\bibitem{Deng-Zhu-2018}
Yuxing Deng and Xiaohua Zhu.
\newblock Asymptotic behavior of positively curved steady {R}icci solitons.
\newblock {\em Trans. Amer. Math. Soc.}, 370(4):2855--2877, 2018.

\bibitem{Deng-Zhu-SCM-2020}
Yuxing Deng and Xiaohua Zhu.
\newblock Classification of gradient steady {R}icci solitons with linear curvature decay.
\newblock {\em Sci. China Math.}, 63(1):135--154, 2020.

\bibitem{Deng-Zhu-JEMS-2020}
Yuxing Deng and Xiaohua Zhu.
\newblock Higher dimensional steady {R}icci solitons with linear curvature decay.
\newblock {\em J. Eur. Math. Soc. (JEMS)}, 22(12):4097--4120, 2020.

\bibitem{Deng-Zhu-Mathann-2020}
Yuxing Deng and Xiaohua Zhu.
\newblock Rigidity of {$\kappa$}-noncollapsed steady {K}\"ahler-{R}icci solitons.
\newblock {\em Math. Ann.}, 377(1-2):847--861, 2020.

\bibitem{gromoll_complete_1969}
Detlef Gromoll and Wolfgang Meyer.
\newblock On complete open manifolds of positive curvature.
\newblock {\em Ann. of Math. (2)}, 90:75--90, 1969.

\bibitem{guan_rigidity_nodate}
Pengfei Guan, Peng Lu, and Yiyan Xu.
\newblock A rigidity theorem for codimension one shrinking gradient {R}icci solitons in {$\Bbb{R}^{n+1}$}.
\newblock {\em Calc. Var. Partial Differential Equations}, 54(4):4019--4036, 2015.

\bibitem{hamilton_formation_1995}
Richard~S. Hamilton.
\newblock The formation of singularities in the {R}icci flow.
\newblock In {\em Surveys in differential geometry, {V}ol.\ {II} ({C}ambridge, {MA}, 1993)}, pages 7--136. Int. Press, Cambridge, MA, 1995.

\bibitem{kobayashi_foundations_1996}
Shoshichi Kobayashi and Katsumi Nomizu.
\newblock {\em Foundations of differential geometry. {V}ol. {I}}.
\newblock Wiley Classics Library. John Wiley \& Sons, Inc., New York, 1996.
\newblock Reprint of the 1963 original, A Wiley-Interscience Publication.

\bibitem{Lai-JDG-2024}
Yi~Lai.
\newblock A family of 3{D} steady gradient solitons that are flying wings.
\newblock {\em J. Differential Geom.}, 126(1):297--328, 2024.

\bibitem{lee_introduction_2011}
John~M. Lee.
\newblock {\em Introduction to topological manifolds}, volume 202 of {\em Graduate Texts in Mathematics}.
\newblock Springer, New York, second edition, 2011.

\bibitem{lee_introduction_2012}
John~M. Lee.
\newblock {\em Introduction to smooth manifolds}, volume 218 of {\em Graduate Texts in Mathematics}.
\newblock Springer, New York, second edition, 2013.

\bibitem{lee_introduction_2018}
John~M. Lee.
\newblock {\em Introduction to {R}iemannian manifolds}, volume 176 of {\em Graduate Texts in Mathematics}.
\newblock Springer, Cham, second edition, 2018.

\bibitem{Liu-2013}
Gang Liu.
\newblock 3-manifolds with nonnegative {R}icci curvature.
\newblock {\em Invent. Math.}, 193(2):367--375, 2013.

\bibitem{ma-Mahmoudian-Sesum-arxiv-2023}
Zilu Ma, Hamidreza Mahmoudian, and Natasa Sesum.
\newblock Unique asymptotics of steady ricci solitons with symmetry.
\newblock arXiv:math.DG/2311.09405, 2023.

\bibitem{Munteanu_wang_2011}
Ovidiu Munteanu and Jiaping Wang.
\newblock Smooth metric measure spaces with non-negative curvature.
\newblock {\em Comm. Anal. Geom.}, 19(3):451--486, 2011.

\bibitem{perelman_entropy_2002}
Grisha Perelman.
\newblock The entropy formula for the ricci flow and its geometric applications.
\newblock arXiv:math.DG/0211159, 2002.

\bibitem{petersen_riemannian_2006}
Peter Petersen.
\newblock {\em Riemannian geometry}, volume 171 of {\em Graduate Texts in Mathematics}.
\newblock Springer, New York, second edition, 2006.

\bibitem{petersen-gradient-2009}
Peter Petersen and William Wylie.
\newblock On gradient {R}icci solitons with symmetry.
\newblock {\em Proc. Amer. Math. Soc.}, 137(6):2085--2092, 2009.

\bibitem{Schoen-Yau-1982}
Richard Schoen and Shing~Tung Yau.
\newblock Complete three-dimensional manifolds with positive {R}icci curvature and scalar curvature.
\newblock In {\em Seminar on {D}ifferential {G}eometry}, volume No. 102 of {\em Ann. of Math. Stud.}, pages 209--228. Princeton Univ. Press, Princeton, NJ, 1982.

\bibitem{sarafutdinov-1979}
V.~A. \v~Sarafutdinov.
\newblock Convex sets in a manifold of nonnegative curvature.
\newblock {\em Mat. Zametki}, 26(1):129--136, 159, 1979.

\bibitem{wink-2023}
Matthias Wink.
\newblock Cohomogeneity one {R}icci solitons from {H}opf fibrations.
\newblock {\em Comm. Anal. Geom.}, 31(3):625--676, 2023.

\bibitem{wolf_spaces_2011}
Joseph~A. Wolf.
\newblock {\em Spaces of constant curvature}.
\newblock AMS Chelsea Publishing, Providence, RI, sixth edition, 2011.

\bibitem{Wylie_2008}
William Wylie.
\newblock Complete shrinking {R}icci solitons have finite fundamental group.
\newblock {\em Proc. Amer. Math. Soc.}, 136(5):1803--1806, 2008.

\bibitem{zhao-zhu-arxiv-2022}
Ziyi Zhao and Xiaohua Zhu.
\newblock Rigidity of the bryant ricci soliton.
\newblock arXiv:math.DG/2212.02889, 2023.

\bibitem{zhao-zhu-arxiv-2024}
Ziyi Zhao and Xiaohua Zhu.
\newblock $4d$ steady gradient ricci solitons with nonnegative curvature away from a compact set.
\newblock arXiv:math.DG/2310.12529, 2024.

\end{thebibliography}
	}
    
\end{spacing}

	{\footnotesize YUXING DENG, SCHOOL OF MATHEMATICS AND STATISTICS, BEIJING INSTITUTE OF TECHNOLOGY, BEIJING,100081, CHINA, 6120180026@BIT.EDU.CN}
	
	{\footnotesize YUEHAN HAO, SCHOOL OF MATHEMATICS AND STATISTICS, BEIJING INSTITUTE OF TECHNOLOGY, BEIJING,100081, CHINA, 3120221441@BIT.EDU.CN}

\end{document}